\newtheorem{theorem}{Theorem}
\newtheorem{lemma}[theorem]{Lemma}
\newtheorem{definition}{Definition}
\newtheorem{proposition}[theorem]{Proposition}
\newtheorem{remark}{Remark}
\newtheorem{algorithm}{Algorithm}
\begin{document}
\title{Broyden quasi-Newton secant-type method for solving \\constrained mixed generalized equations}

\author{%
		P. C. da Silva Junior \thanks{IME, Universidade Federal de Goias,  GO, 74001-970, Brazil. E-mail: \texttt{silvapaulo@discente.ufg.br}}%
		\and%
		O. P. Ferreira \thanks{IME, Universidade Federal de Goias,  GO, 74001-970, Brazil. E-mail: {\tt orizon@ufg.br}. This author was   supported in part by CNPq grant 304666/2021-1.}%
		\and%
		G. N. Silva\thanks{Departamento de Matem\'atica, Universidade Federal do Piau\'i, Teresina, PI,   Brazil.. E-mail: \texttt{gilson.silva@ufpi.edu.br}%
		}%
	}

\maketitle

\begin{abstract}
This paper presents a novel variant of the Broyden quasi-Newton secant-type method aimed at solving constrained  mixed generalized equations, which can include functions that are not necessarily differentiable. The proposed method integrates the classical secant approach with techniques inspired by the Conditional Gradient method to handle constraints effectively. We establish local convergence results by applying the contraction mapping principle. Specifically, under assumptions of Lipschitz continuity, a modified Broyden update for derivative approximation, and the metric regularity property, we show that the algorithm generates a well-defined sequence that converges locally at a Q-linear rate. 
\end{abstract}

\noindent
{\bf Keywords:}  Constrained generalized equations, Secant method, quasi-Newton method, Lispchitz continuity, Broyden, metric regularity property, local convergence.

\medskip
\noindent
{\bf AMS subject classification:}  65K15, 49M15,  90C30.

\section{Introduction}

Introduced by S.~M.~Robinson in the early 1970s, generalized equations have become a versatile tool for describing, analyzing, and solving a wide range of problems in a unified manner, and have since been the focus of extensive research. For a thorough exploration of generalized equations and their applications, see \cite{DontchevRockafellar2010Book, josephy1979, Robinson1972_2, Robinson1980, Robinson1983} and the references therein. In particular, Newton's method and its variations for generalized equations have been extensively studied due to their appealing convergence properties~\cite{DontchevAragon2014, DontchevAragon2011, Bonnans, Dontchev1996, DontchevRockafellar2013,  FERREIRASilva2018, FerreiraSilva2017, RashidLiWu2013}. These studies have established conditions for superlinear and/or quadratic local convergence of Newton-type methods, often under assumptions such as strong regularity or metric regularity of the generalized equation, and Lipschitz-like conditions on the derivative of the vector-valued function. Moreover, the growing interest in developing both theoretical and computational tools for solving generalized equations stems from their ability to abstractly model various families of problems, including systems of nonlinear equations, equilibrium problems, linear and nonlinear complementarity problems, and variational inequality problems~\cite{DontchevRockafellar2010Book, FerrisPang1997, Robinson1979, Robinson1980, Robinson1982, Robinson1983}.

In this paper, we propose a method for solving generalized equations subject to a set of constraints, namely, to solve the following problem:
\begin{equation} \label{eq:GenEqI}
\mbox{find} \quad x \in C \quad \mbox{such that} \quad f(x) + g(x) + F(x) \ni 0,
\end{equation}
where $f: \Omega \to \mathbb{R}^n$ is a continuously differentiable function, $g: \Omega \to \mathbb{R}^n$ is a continuous function not necessarily differentiable, $\Omega \subseteq \mathbb{R}^n$ is an open set, $C \subset \Omega$ is a closed and convex set, and $F: \mathbb{R}^n \rightrightarrows \mathbb{R}^n$ is a set-valued mapping with closed nonempty graph. Constrained generalized equations of the form~\eqref{eq:GenEqI} encompass important mathematical problems, including the constrained variational inequality problem. For instance, if $g(x) = 0$ for all $x \in \Omega$ and $F$ is the normal cone mapping $N_D: \mathbb{R}^n \rightrightarrows \mathbb{R}^n$ of a convex set $D \subset \mathbb{R}^n$, then problem~\eqref{eq:GenEqI} reduces to the following constrained variational inequality problem:
\begin{equation} \label{VIP}
\mbox{find} \quad x \in C \quad \mbox{such that} \quad \langle f(x), y - x \rangle \geq 0, \qquad \forall~y \in D,
\end{equation}
where $C \subseteq D$ is a closed convex set and $\langle \cdot, \cdot \rangle$ denotes an  inner product in $\mathbb{R}^n$. A particular case of problem~\eqref{VIP} is the constrained nonlinear complementarity problem, which is formulated as
\[
\mbox{find} \quad x \in C \quad \mbox{such that} \quad x \in \mathbb{R}^n_+, \quad f(x) \in \mathbb{R}^n_+, \quad \text{and} \quad \langle f(x), x \rangle = 0,
\]
where $C \subseteq \mathbb{R}^n_+$. 
It is also known that if $F$ is the zero mapping, i.e., $F \equiv \{0\}$  and $g=0$, then problem~\eqref{eq:GenEqI} reduces to solving the constrained system of nonlinear equations $f(x) = 0$ with $x \in C$. This class of problems has been extensively studied, and various methods have been proposed for their solution; see, for example,~\cite{bellavia2006, MaxJefferson2017, GoncalvesOliveira2017, Cruz2014, MariniMoriniPorcelli2017}. Conversely, if $C = \Omega$ in problem~\eqref{eq:GenEqI}, we are dealing with unconstrained generalized equations, or simply generalized equations. 
An important example of this type is the Karush-Kuhn-Tucker (KKT) system for a standard nonlinear programming problem with a strict local minimizer; see~\cite[p.~269]{DontchevRockafellar2010Book},  for the case $g=0$.

Let $\mathbb{X}$ be a Banach space and $x_0, x_1 \in \mathbb{X}$ be given. Geffroy and Piétrus~\cite{GeoffroyPietrus2004} introduced the Newton-secant method for solving \eqref{eq:GenEqI} which consists in iteratively find $x_{k+1}$ as a solution of the  following subproblem, 
\begin{equation}\label{meth:geffroy2016}
    f(x_k) + g(x_k) + \big(f'(x_k) + [x_{k-1}, x_k; g]\big)(x_{k+1} - x_k) + F(x_{k+1}) \ni 0, 
\end{equation}  
where $k = 1, 2, \ldots$. The term $[x_{k-1}, x_k; g]$ represents the divided difference operator associated with $g:\mathbb{X}\to \mathbb{Y}$, $F: \mathbb{X} \rightrightarrows \mathbb{Y}$ is a set-valued mapping with closed nonempty graph, and $\mathbb{Y}$ is also a Banach space. It is known that the above iterative process was investigated by Catinas in~\cite{Catinas1994} for the specific case $F \equiv 0$. Hernández and Rubio~\cite{HernandezRubio2002} also performed a semilocal convergence analysis of this method \eqref{meth:geffroy2016} under the same conditions as in~\cite{Catinas1994}.  Additionally, significant modifications of the scheme \eqref{meth:geffroy2016} have been explored. For instance, Jean-Alexis and Piétrus~\cite{Jean-AlexisPietrus2008} proposed the following iterative approach for solving \eqref{eq:GenEqI}:  
\begin{equation*}
    f(x_k) + g(x_k) + \big(f'(x_k) + [2x_{k-1} - x_k, x_k; g]\big)(x_{k+1} - x_k) + F(x_{k+1}) \ni 0,
\end{equation*}  
for $k = 1, 2, \ldots$. Under appropriate assumptions, they established a local convergence analysis for this approach in~\cite{Jean-AlexisPietrus2008}, demonstrating a superlinear convergence rate. Several years later, Rashid, Wang, and Li~\cite{RashidWangLi2012} also conducted a local convergence analysis for the same method but under distinct assumptions compared to~\cite{Jean-AlexisPietrus2008}.  

It is widely recognized that Newton's method presents certain practical challenges. For instance, it requires the computation of the Jacobian matrix at each iteration, in addition to solving a linear system exactly. These requirements can render Newton's method inefficient, particularly for large-scale problems, as discussed in~\cite{leong2011matrix}.  To address these limitations, It was proposed in \cite{amaral2024quasi,de2024modified} the following quasi-Newton method for solving the unconstrained version of \eqref{eq:GenEqI} in Banach spaces, where the Fréchet derivative $f'(x_k)$ is replaced by a perturbed Fréchet derivative $A(x_k)$, which is computationally more efficient or simpler to evaluate:  
\begin{equation}\label{meth:vital1}
    f(x_k) + g(x_k) + A(x_k) + [x_{k-1}, x_k; g](x_{k+1} - x_k) + F(x_{k+1}) \ni 0, 
\end{equation}  
for $k = 1, 2, \ldots$. The function $A : \mathbb{X} \to \mathbb{Y}$ serves as an approximation of the Fréchet derivative $f' : \mathbb{X} \to L(\mathbb{X}, \mathbb{Y})$, and it satisfies a relaxed H\"older-type condition.
The main assumptions used in  \cite{amaral2024quasi} are:  (1) $f $ is Fréchet differentiable with a H\"older continuous derivative; (2)  $g$ is a continuous function allowing for the computation of first- and second-order divided differences; (3) $(f(x_1) + (A(x_1) + [x_0, x_1; g])(\cdot - x_1) + g(x_1) + F(\cdot))^{-1}$ is Aubin continuous at $0$ for $x_2$.  In  \cite{amaral2024quasi,de2024modified}, stronger assumptios are assumed to obtain linear convergence of the sequence $\{x_k\}$ generated by  \eqref{meth:vital1}.

The Broyden quasi-Newton secant-type method for generalized equations, originating from the works of N.~H.~Josephy~\cite{josephy1979}, has the following formulation: for the current iterates $x_{k-1}, x_k \in \mathbb{R}^n$, the next iterate $x_{k+1}$ is computed as a point satisfying the following inclusion:
\begin{equation} \label{eq:NewtonMethGenEq}
f(x_k) + g(x_k) + \big( B_k + [x_{k-1}, x_k; g] \big)(y_k - x_k) + F(y_k) \ni 0, \qquad k = 0, 1, \ldots,
\end{equation}
where $(B_k)_{k\in \mathbb{N}}$ is a sequence of bounded linear mappings between $\mathbb{R}^n$ satisfying the classical Broyden update rule, and $[x_{k-1}, x_k; g]$ is the first-order divided difference of the function $g: \Omega \to \mathbb{R}^n$ at the points $x_{k-1}$ and $x_k$. The specific choice of how $B_k$ is constructed dictates the particular quasi-Newton method being used, such as Broyden, BFGS, SR1, and others. Typically, the initial mapping $B_0$ is chosen to be a good approximation of $f'(\bar{x})$. Note that at each iteration, a partially linearized inclusion at the current iterate needs to be solved. The method described by~\eqref{eq:NewtonMethGenEq} thus serves as a framework for various iterative procedures in numerical nonlinear programming. For example, when $F \equiv \{0\}$, this method reduces to the standard Newton's method for solving systems of nonlinear equations. If $F$ represents the product of the negative orthant in $\mathbb{R}^s$ with the origin in $\mathbb{R}^{m-s}$, i.e., $F = \mathbb{R}^s_{-} \times \{0\}^{m-s}$, then~\eqref{eq:NewtonMethGenEq} corresponds to Newton's method for solving systems of nonlinear equalities and inequalities~\cite{Daniel1973}. Furthermore, if problem~\eqref{eq:GenEqI} with $C = \Omega$ represents the Karush-Kuhn-Tucker optimality conditions for a nonlinear programming problem, then~\eqref{eq:NewtonMethGenEq} describes the well-known sequential quadratic programming method~\cite[p.~384]{DontchevRockafellar2010Book}; see also~\cite{DontchevAs1996, IzmailovSolodov2010}. For unconstrained problems, where $C = \mathbb{R}^n$ or $C = \Omega$, we can simply set $x_{k+1} = y_k$. The iterative scheme described in~\eqref{eq:NewtonMethGenEq} was initially proposed by Catinas~\cite{Catinas1994} for the specific case where $F \equiv 0$ and $C = \mathbb{R}^n$, along with a local convergence analysis. This was followed by a semi-local analysis in~\cite{HernandezRubio2002}. Geoffroy and Pietrus~\cite{GeoffroyPietrus2004} extended this method to the case where $C = \mathbb{R}^n$ and $F \neq 0$, providing a local convergence analysis, with a semi-local analysis appearing subsequently in~\cite{HiloutPietrus2006}. Other significant variants of this method have been explored in works such as~\cite{daSilvaJunior2024, Jean-AlexisPietrus2008, RashidWangLi2012}.

Since the iterate $y_k$ in~\eqref{eq:NewtonMethGenEq} can sometimes be infeasible with respect to the constraint set $C$, a strategy is needed to ensure the feasibility of the generated sequence. To address this issue, in this paper,  we employ the conditional gradient (CondG) method, also known as the Frank-Wolfe method. Originally proposed for quadratic problems over a polyhedron~\cite{FrankWolfe1956}, this method has since been generalized to other problems~\cite{DemyanovRubinov1973, Dunn1979, Dunn1980, DunnHarshbarger1978, LEVITIN1966}. The CondG method has garnered significant attention in recent years~\cite{Nemirovski2015, LanZhou2016, RonnyTeboulle2013}, motivating its use in conjunction with the Broyden quasi-Newton secant-type method for solving problem~\eqref{eq:GenEqI}, as used, for example, in~\cite{daSilvaJunior2024}. From a theoretical perspective, we establish that the Broyden quasi-Newton secant-type method~\eqref{eq:NewtonMethGenEq} is well-defined and locally convergent, with a q-linear convergence rate. The key assumptions required for these results include metric regularity, Lipschitz continuity of the derivative $f'$, the approximation of $f'(x_k)$ via the sequence $\{ B_k \}$ of bounded linear mappings satisfying the classical Broyden update rule, and boundedness of the second-order divided difference of $g$. For a detailed discussion on metric regularity, see~\cite{DontchevRockafellar2013}.

The remainder of this paper is organized as follows. In Section~\ref{sec:int.1}, we present the notation and some technical results that are used throughout the paper. In Section~\ref{Sec:LocalAnalysis}, we describe the Broyden quasi-Newton secant-type method and present its local convergence properties. We conclude the paper with some remarks in Section~\ref{Sec:Conclusions}.

\section{Preliminaries} \label{sec:int.1}
In this section, we introduce the notation and basic concepts that will be used throughout the paper. Let $\|\cdot\|$ denote the Euclidean norm in ${\mathbb R}^n$. For $x \in \mathbb{R}^n$ and $\delta > 0$, we define the \emph{open ball} centered at $x$ with radius $\delta$ as
\(
B_{\delta}(x) := \{ y \in \mathbb{R}^n : \|x - y\| < \delta \},
\)
and the \emph{closed ball} as
\(
B_{\delta}[x] := \{ y \in \mathbb{R}^n : \|x - y\| \leq \delta \}.
\)
Let ${\mathscr L}({\mathbb R}^n, {\mathbb R}^n)$ denote the space of all continuous linear mappings from ${\mathbb R}^n$ to ${\mathbb R}^n$. For $A \in {\mathscr L}({\mathbb R}^n, {\mathbb R}^n)$, the norm of $A$ is defined by
\[
\|A\| := \sup\{ \|A x\| : \|x\| \leq 1 \}.
\]
Let $\Omega \subseteq \mathbb{R}^n$ be an open set. The derivative of a differentiable function $h: \Omega \to \mathbb{R}^n$ at $x \in \Omega$ is the linear mapping $h'(x): \mathbb{R}^n \to \mathbb{R}^n$.

\subsection{Divided Differences}
Divided differences are fundamental tools in numerical analysis and approximation theory, particularly in the context of interpolation and the analysis of iterative methods such as the secant method. In the study of quasi-Newton methods for solving nonlinear equations and generalized equations, divided differences are used to approximate derivatives when exact derivatives are unavailable or expensive to compute. In this subsection, we formalize the definitions of first and second order divided differences for vector-valued functions, which play a crucial role in the convergence analysis of our proposed method.

\begin{definition}[First Order Divided Difference]\label{def:divided_difference_first}
Let $\Omega \subseteq \mathbb{R}^n$ be an open set, and let $g: \Omega \to \mathbb{R}^n$ be a function. For any distinct points $x, y \in \Omega$ with $x \neq y$, the \emph{first order divided difference} of $g$ at the points $x$ and $y$ is the linear operator $[x, y; g] \in \mathscr{L}(\mathbb{R}^n, \mathbb{R}^n)$ defined by the relation
\begin{equation*}
[x, y; g](y - x) = g(y) - g(x).
\end{equation*}
Equivalently, we can write
\(
g(y) = g(x) + [x, y; g](y - x).
\)
When $g$ is differentiable at $x$, and in the limit as $y$ goes  to $x$, the first order divided difference converges to the derivative of $g$ at $x$. Therefore, we define
\begin{equation*}
[x, x; g] = g'(x),
\end{equation*}
where $g'(x)$ is the Jacobian matrix of $g$ at $x$.
\end{definition}

The first order divided difference $[x, y; g]$ can be viewed as an average of the derivatives of $g$ along the line segment from $x$ to $y$. It provides a linear approximation to $g$ over this segment, capturing the behavior of $g$ between $x$ and $y$. We proceed by introducing the second order divided difference. 
\begin{definition}[Second Order Divided Difference]\label{def:divided_difference_second}
Let $\Omega \subseteq \mathbb{R}^n$ be an open set, and let $g: \Omega \to \mathbb{R}^n$ be a function. For any distinct points $x, y, z \in \Omega$ with $x \neq y$, $x \neq z$, and $y \neq z$, the \emph{second order divided difference} of $g$ at the points $x$, $y$, and $z$ is the bilinear operator $[x, y, z; g] \in \mathscr{L}(\mathbb{R}^n, \mathscr{L}(\mathbb{R}^n, \mathbb{R}^n))$ defined by the relation
\begin{equation}\label{eq:SecOrderDividedDif}
[x, y, z; g](z - x) = [y, z; g] - [x, y; g].
\end{equation}
When $g$ is twice differentiable at $x$, and in the limit as $y$ and $z$ goes  to $x$, the second order divided difference converges to half the second derivative of $g$ at $x$. Therefore, we define
\begin{equation*}
[x, x, x; g] = \tfrac{1}{2} g''(x),
\end{equation*}
where $g''(x)$ denotes the Hessian matrix of $g$ at $x$.
\end{definition}
The second order divided difference $[x, y, z; g]$ captures the curvature of $g$ over the region spanned by $x$, $y$, and $z$. It provides a quadratic approximation to $g$ in this region, which is essential in understanding the behavior of iterative methods that utilize second-order information. In quasi-Newton and secant methods, divided differences serve as approximations to the Jacobian or Hessian matrices. Specifically, the first order divided difference $[x_k, x_{k-1}; g]$ approximates the derivative $g'(x_k)$ and is used to construct updates in the iterative scheme. The analysis of such methods relies on the properties of the divided differences and their relation to the true derivatives. The use of divided differences allows for derivative-free implementations of iterative methods, which can be advantageous when derivatives are difficult to compute. However, careful analysis is required to ensure that the approximations do not adversely affect the convergence properties of the method.

\subsection{Set-valued mappings}
Set-valued mappings generalize functions by allowing multiple outputs for each input. They are essential in the study of generalized equations.
\begin{definition}[Graph, Domain, and Range of a Set-Valued Mapping]
Let $F: \mathbb{R}^n \rightrightarrows \mathbb{R}^n$ be a set-valued mapping. The \emph{graph} of $F$ is defined as
\(
\mathrm{gph}\, F := \{ (x, y) \in \mathbb{R}^n \times \mathbb{R}^n : y \in F(x) \}.
\)
The \emph{domain} of $F$ is
\(
\mathrm{dom}\, F := \{ x \in \mathbb{R}^n : F(x) \neq \emptyset \},
\)
and the \emph{range} of $F$ is
\(
\mathrm{rge}\, F := \{ y \in \mathbb{R}^n : y \in F(x) \text{ for some } x \in \mathbb{R}^n \}.
\)
The \emph{inverse} of $F$ is the set-valued mapping $F^{-1}: \mathbb{R}^n \rightrightarrows \mathbb{R}^n$ defined by
\(
F^{-1}(y) := \{ x \in \mathbb{R}^n : y \in F(x) \}.
\)
\end{definition}

Understanding the distance between points and sets, as well as the excess of one set over another, is fundamental in variational analysis.

\begin{definition}[Distance and Excess]
Let $C$ and $D$ be subsets of $\mathbb{R}^n$. The \emph{distance} from a point $x \in \mathbb{R}^n$ to the set $D$ is defined by
\(
d(x, D) := \inf_{\hat{x} \in D} \| x - \hat{x} \|.
\)
The \emph{excess} of $C$ over $D$ is defined by
\(
e(C, D) := \sup_{x \in C} d(x, D).
\)
We adopt the conventions that $d(x, D) = +\infty$ when $D = \emptyset$, $e(\emptyset, D) = 0$ when $D \neq \emptyset$, and $e(\emptyset, \emptyset) = +\infty$.
\end{definition}

Metric regularity is a key concept in variational analysis and is instrumental in establishing convergence properties of iterative methods for solving generalized equations.

\begin{definition}[Metric Regularity]\label{def:metric_regularity}
Let $\Omega \subset \mathbb{R}^n$ be open and nonempty, and let $G: \Omega \rightrightarrows \mathbb{R}^m$ be a set-valued mapping. We say that $G$ is \emph{metrically regular} at $\bar{x} \in \Omega$ for $\bar{u} \in \mathbb{R}^m$ if $\bar{u} \in G(\bar{x})$, the graph of $G$ is locally closed at $(\bar{x}, \bar{u})$, and there exist constants  \textcolor{blue}{$\kappa > 0$}, $a > 0$, and $b > 0$ such that $B_a[\bar{x}] \subset \Omega$ and
\[
d(x, G^{-1}(u)) \leq \kappa d(u, G(x)), \quad \forall (x, u) \in B_a[\bar{x}] \times B_b[\bar{u}].
\]
If, in addition, the mapping $u \mapsto G^{-1}(u) \cap B_a[\bar{x}]$ is single-valued on $B_b[\bar{u}]$, we say that $G$ is \emph{strongly metrically regular} at $\bar{x}$ for $\bar{u}$.
\end{definition}

\begin{remark}
When the mapping $u \mapsto G^{-1}(u) \cap B_a[\bar{x}]$ is single-valued, we may write $w = G^{-1}(u) \cap B_a[\bar{x}]$ instead of $\{ w \} = G^{-1}(u) \cap B_a[\bar{x}]$ for simplicity.
\end{remark}

The contraction mapping principle is a fundamental result in fixed-point theory and will be used in our analysis.
\begin{theorem}[Contraction Mapping Principle]\label{thm:contraction_principle}
Let $\Phi: \mathbb{R}^n \rightrightarrows \mathbb{R}^n$ be a set-valued mapping, and let $\bar{x} \in \mathbb{R}^n$. Suppose there exist scalars $r > 0$ and $\zeta \in (0, 1)$ such that the set $\mathrm{gph}\, \Phi \cap (B_r[\bar{x}] \times B_r[\bar{x}])$ is closed, and
\begin{itemize}
    \item[(a)] $d(\bar{x}, \Phi(\bar{x})) \leq r(1 - \zeta)$;
    \item[(b)] $e(\Phi(x) \cap B_r[\bar{x}], \Phi(x')) \leq \zeta \| x - x' \|$, for all $x, x' \in B_r[\bar{x}]$.
\end{itemize}
Then, $\Phi$ has a fixed point in $B_r[\bar{x}]$; that is, there exists $\hat{x} \in B_r[\bar{x}]$ such that $\hat{x} \in \Phi(\hat{x})$. Moreover, if $\Phi$ is single-valued, then $\hat{x}$ is the unique fixed point of $\Phi$ in $B_r[\bar{x}]$.
\end{theorem}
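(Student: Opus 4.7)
The plan is a Picard-style iteration adapted to set-valued mappings: I would inductively construct a sequence $\{x_k\} \subset B_r[\bar x]$ with $x_{k+1} \in \Phi(x_k)$ and geometric contraction $\|x_{k+1} - x_k\| \leq \zeta^k r(1-\zeta)$, and then invoke closedness of the graph to identify the limit as a fixed point.

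Set $x_0 := \bar x$. The crucial technical observation, which I would isolate as a small lemma, is that from hypothesis (a) one can deduce $d(\bar x, \Phi(\bar x) \cap B_r[\bar x]) \leq r(1-\zeta)$: given any $\epsilon \in (0, r\zeta]$, condition (a) produces $y \in \Phi(\bar x)$ with $\|y - \bar x\| \leq r(1-\zeta) + \epsilon \leq r$, so $y$ lies in $\Phi(\bar x) \cap B_r[\bar x]$, and letting $\epsilon \to 0$ gives the claim. In finite dimensions $B_r[\bar x]$ is compact, and since $\mathrm{gph}\,\Phi \cap (B_r[\bar x] \times B_r[\bar x])$ is closed, the fiber $\Phi(\bar x) \cap B_r[\bar x]$ is compact, so the distance is attained at some $x_1 \in \Phi(\bar x) \cap B_r[\bar x]$ with $\|x_1 - \bar x\| \leq r(1-\zeta)$.

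For the inductive step, assume $x_k \in \Phi(x_{k-1}) \cap B_r[\bar x]$ has been constructed with $\|x_k - \bar x\| \leq r(1-\zeta^k)$ and $\|x_k - x_{k-1}\| \leq \zeta^{k-1} r(1-\zeta)$. Applying hypothesis (b) with $x = x_{k-1}$ and $x' = x_k$ gives $d(x_k, \Phi(x_k)) \leq \zeta \|x_k - x_{k-1}\|$, and the same approximation-then-relocation argument used above shows $d(x_k, \Phi(x_k) \cap B_r[\bar x]) \leq \zeta \|x_k - x_{k-1}\|$: any near-optimal $y \in \Phi(x_k)$ satisfies $\|y - \bar x\| \leq \zeta \|x_k - x_{k-1}\| + \|x_k - \bar x\| + \epsilon \leq r(1-\zeta^{k+1}) + \epsilon \leq r$ for $\epsilon$ small. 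Compactness then delivers $x_{k+1} \in \Phi(x_k) \cap B_r[\bar x]$ attaining this distance, and the telescoping estimate $\|x_{k+1} - \bar x\| \leq \zeta^k r(1-\zeta) + r(1-\zeta^k) = r(1-\zeta^{k+1})$ closes the induction. The sequence is Cauchy because $\sum_k \zeta^k r(1-\zeta) < \infty$, so it converges to some $\hat x \in B_r[\bar x]$; since $(x_k, x_{k+1}) \in \mathrm{gph}\,\Phi \cap (B_r[\bar x] \times B_r[\bar x])$ and this set is closed, passing to the limit gives $\hat x \in \Phi(\hat x)$.

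The main obstacle is reconciling the two kinds of infima that appear: the distance $d(\bar x, \Phi(\bar x))$ in (a) and the excess $e(\Phi(x) \cap B_r[\bar x], \Phi(x'))$ in (b) involve the full image $\Phi(\cdot)$, while the inductive construction must keep iterates inside $B_r[\bar x]$. The approximation-and-relocation technique above, powered by the compactness of $B_r[\bar x]$ in $\mathbb{R}^n$, allows one to replace $\Phi(x_k)$ with the closed set $\Phi(x_k) \cap B_r[\bar x]$ without losing the distance bound, so the infimum is actually attained at each step. For the uniqueness statement in the single-valued case, applying (b) to two fixed points $\hat x, \tilde x \in B_r[\bar x]$ yields $\|\hat x - \tilde x\| \leq \zeta \|\hat x - \tilde x\|$, forcing $\hat x = \tilde x$.
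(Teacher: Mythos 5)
Your proof is correct. Note that the paper does not prove this statement at all; it simply cites Theorem~5E.2 of Dontchev--Rockafellar, and your argument is essentially the standard proof of that result: a Picard-type iteration $x_{k+1}\in\Phi(x_k)\cap B_r[\bar x]$ with the geometric bound $\|x_{k+1}-x_k\|\le \zeta^k r(1-\zeta)$, the telescoping estimate keeping the iterates in the ball, and closedness of $\mathrm{gph}\,\Phi\cap(B_r[\bar x]\times B_r[\bar x])$ to pass to the limit. The one genuine difference is that you invoke compactness of $B_r[\bar x]$ in $\mathbb{R}^n$ to have each distance attained, whereas the cited proof (which works in arbitrary complete metric or Banach spaces) avoids attainment entirely by selecting $x_{k+1}$ only $\epsilon_k$-optimally for a summable sequence $(\epsilon_k)$ and absorbing the errors into the Cauchy estimate. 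Your ``approximation-and-relocation'' lemma, showing that the infimum over $\Phi(x_k)$ equals the infimum over $\Phi(x_k)\cap B_r[\bar x]$ because near-optimal points automatically land in the ball, is exactly the right observation and is needed in either version; your uniqueness argument in the single-valued case is also the standard one. So the proposal is sound, and slightly less general than the quoted theorem only because it leans on finite-dimensionality where that is not actually necessary.
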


\begin{proof}
See \cite[Theorem~5E.2, p.~313]{DontchevRockafellar2010Book}.
\end{proof}

\section{Broyden Quasi-Newton Secant-Type Method} \label{Sec:LocalAnalysis}
In this section, we introduce a {\it secant-type method} designed to solve the constrained problem~\eqref{eq:GenEq}. Assuming the metric regularity of the partial linearization of $f + g + F$, Lipschitz continuity of the derivative $f'$, and boundedness of the second-order divided difference of $g$, we analyze the sequence generated by the proposed inexact Broyden quasi-Newton secant-type method. Our method incorporates an inner subroutine, referred to as the {\it CondG procedure}, which computes a feasible inexact projection onto the set~$C$. This procedure is based on methods used in previous works (see, e.g., \cite{LanZhou2016}, \cite[p.~215]{Bertsekas1999}, \cite{OliveiraFerreiraSilva2019, MaxJefferson2017}). Before detailing the CondG procedure, we recall the concept of feasible inexact projection as introduced in~\cite{OliveiraFerreiraSilva2019}.

\begin{definition} \label{def:InexactProj}
Let $C \subset \mathbb{R}^n$ be a closed and convex set, $x \in C$, and $\theta \geq 0$. The \emph{feasible inexact projection mapping} relative to $x$ and forcing term $\theta$, denoted by $P_C(\cdot, x, \theta): \mathbb{R}^n \to C$, is the set-valued mapping defined as
\[
P_C(y, x, \theta) := \left\{ w \in C : \left\langle y - w, z - w \right\rangle \leq \theta \|y - x\|^2, \quad \forall z \in C \right\}.
\]
Each point $w \in P_C(y, x, \theta)$ is called a \emph{feasible inexact projection of $y$ onto $C$ with respect to $x$ and error tolerance $\theta$}.
\end{definition}

\begin{remark}
Since $C \subset \mathbb{R}^n$ is a closed and convex set, Proposition~2.1.3 of~\cite[p.~201]{Bertsekas1999} implies that for each $y \in \mathbb{R}^n$, the exact projection $P_C(y)$ belongs to $P_C(y, x, \theta)$. Therefore, $P_C(y, x, \theta) \neq \varnothing$ for all $y \in \mathbb{R}^n$ and $x \in C$. Moreover, when $\theta = 0$, we have $P_C(y, x, 0) = \{ P_C(y) \}$ for all $y \in \mathbb{R}^n$ and $x \in C$. In this case, we simply write $P_C(y, x, 0) = P_C(y)$.
\end{remark}

The following lemma, whose proof can be found in~\cite[Lemma~4]{MaxJefferson2017} (see also~\cite{OliveiraFerreiraSilva2019}), establishes a basic property of the feasible inexact projection and plays a crucial role in our analysis.

\begin{lemma} \label{pr:condi}
Let $y, \tilde{y} \in \mathbb{R}^n$, $x, \tilde{x} \in C$, and $\theta \geq 0$. Then, for any $u \in P_C(y, x, \theta)$, we have
\[
\left\| u - P_C(\tilde{y}, \tilde{x}, 0) \right\| \leq \| y - \tilde{y} \| + \sqrt{2\theta} \| y - x \|.
\]
\end{lemma}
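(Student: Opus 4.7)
The plan is to show the bound by writing down the two variational inequalities that characterize the two projections and combining them. Let me denote $\tilde{u} := P_C(\tilde{y}, \tilde{x}, 0) = P_C(\tilde{y})$, the exact projection of $\tilde{y}$ onto $C$. Since $u \in P_C(y, x, \theta)$, by Definition~\ref{def:InexactProj} we have
\[
\langle y - u,\, z - u \rangle \leq \theta \|y - x\|^2 \qquad \forall\, z \in C,
\]
and since $\tilde{u}$ is the exact projection of $\tilde{y}$, the classical characterization of projections onto closed convex sets gives
\[
\langle \tilde{y} - \tilde{u},\, z - \tilde{u} \rangle \leq 0 \qquad \forall\, z \in C.
\]

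First, I would take $z = \tilde{u}$ in the first inequality and $z = u$ in the second (both points lie in $C$). Adding the resulting scalar inequalities, the right-hand sides combine to $\theta\|y-x\|^2$, and after regrouping the left-hand side as $\langle (y - \tilde{y}) - (u - \tilde{u}),\, \tilde{u} - u\rangle$, one obtains
\[
\|u - \tilde{u}\|^2 + \langle y - \tilde{y},\, \tilde{u} - u\rangle \leq \theta \|y - x\|^2.
\]
Applying the Cauchy--Schwarz inequality to the cross term yields
\[
\|u - \tilde{u}\|^2 \leq \theta \|y - x\|^2 + \|y - \tilde{y}\| \, \|u - \tilde{u}\|.
\]

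The main (and only mildly delicate) step is to absorb the linear term $\|y-\tilde y\|\,\|u-\tilde u\|$ into the quadratic $\|u-\tilde u\|^2$ with exactly the right constants to recover the stated $\sqrt{2\theta}$. For this I would apply Young's inequality in the sharp form $ab \leq \tfrac{1}{2}a^2 + \tfrac{1}{2}b^2$ to $a = \|y-\tilde y\|$ and $b = \|u-\tilde u\|$, which after rearrangement gives
\[
\|u - \tilde{u}\|^2 \leq 2\theta \|y - x\|^2 + \|y - \tilde{y}\|^2.
\]
Finally, using the elementary inequality $\sqrt{\alpha + \beta} \leq \sqrt{\alpha} + \sqrt{\beta}$ for $\alpha, \beta \geq 0$, I would take square roots to conclude
\[
\|u - \tilde{u}\| \leq \|y - \tilde{y}\| + \sqrt{2\theta}\,\|y - x\|,
\]
which is the desired estimate. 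The whole argument is short; the one point to get right is the particular split in Young's inequality, since a different split would produce a different constant in front of $\sqrt{\theta}\|y-x\|$.
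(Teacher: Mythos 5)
Your proof is correct: the two variational inequalities, the regrouping that produces $\|u-\tilde u\|^2 + \langle y-\tilde y,\tilde u-u\rangle \le \theta\|y-x\|^2$, and the Young-inequality split all check out and deliver exactly the stated constant $\sqrt{2\theta}$. The paper itself does not prove this lemma but defers to \cite[Lemma~4]{MaxJefferson2017}, and the argument there is essentially the one you give, so there is nothing to add.
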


We now present the CondG procedure, which computes a feasible inexact projection onto the set~$C$. This procedure is based on the conditional gradient method, also known as the Frank-Wolfe algorithm (see, e.g., \cite{Bertsekas1999}). The CondG procedure with input data $v$, $u$, and $\theta$ is formally described as follows.

\begin{algorithm}   \label{Alg:CondG}
\begin{footnotesize}
{\bf (CondG Procedure)}
\begin{description}
\item[\textbf{Input}] Choose $\theta \geq 0$, $v \in \mathbb{R}^n$, and $w \in C$ (with $w \neq v$). Set $w_0 = w$ and $\ell = 0$.
\item[\textbf{Step 1}]  Use a Linear Optimization  oracle to compute an optimal solution $z_\ell$ and the optimal value $s_{\ell}^*$ as
\begin{equation} \label{eq:CondG_C}
z_\ell := \arg\min_{z \in C} \langle w_\ell - v, z - w_\ell \rangle, \qquad s_\ell^* := \langle w_\ell - v, z_\ell - w_\ell \rangle.
\end{equation}
\item[\textbf{Step 2}] If $-s_\ell^* \leq \theta \| v - u \|^2$, then \textbf{stop} and set $w^+ = w_\ell$.
\item[\textbf{Step 3}] Compute $\alpha_\ell \in \, (0,1]$ and update  $w_{\ell+1}$ as 
\[
\alpha_\ell := \min \left\{ 1, \frac{ -s_\ell^* }{ \| z_\ell - w_\ell \|^2 } \right\},\qquad \qquad 
w_{\ell+1} := w_\ell + \alpha_\ell ( z_\ell - w_\ell ).
\]
\item[\textbf{Step 4}] Set $\ell \gets \ell + 1$ and return to \textbf{Step 1}.
\item[\textbf{Output}] $w^+ = w_\ell$.
\end{description}
\end{footnotesize}
\end{algorithm}

We now highlight the key aspects of Algorithm~\ref{Alg:CondG}. The algorithm aims to approximate the projection of $v$ onto the set  $C$ by iteratively improving the estimate $w_\ell$. At each iteration, the algorithm solves the linearized subproblem~\eqref{eq:CondG_C}, which involves minimizing the linear approximation of the distance function over~$C$. The optimal value $s_\ell^*$ obtained in~\eqref{eq:CondG_C} measures the potential reduction in the objective function. If this reduction is smaller than the specified tolerance, i.e., $-s_\ell^* \leq \theta \| v - u \|^2$, the algorithm terminates, returning $w^+ = w_\ell$ as the approximate projection. If the condition is not met, the algorithm updates the current estimate by moving towards $z_\ell$ using a step size~$\alpha_\ell$. The choice of $\alpha_\ell$ ensures sufficient decrease in the objective function while keeping $w_{\ell+1}$ within~$C$ due to the convexity of~$C$. Under mild assumptions, it can be shown that the sequence $( s_\ell^* )_{\ell \in \mathbb{N}}$ converges to zero (see, e.g., \cite[Proposition~A.2]{BeckTeboulle2004}), guaranteeing that the algorithm will terminate after a finite number of iterations. In essence, Algorithm~\ref{Alg:CondG} provides an efficient method for computing an approximate projection onto~$C$ that satisfies the inexact projection condition required by our main algorithm.

Next, we present the inexact Broyden quasi-Newton secant-type method for solving the mixed generalized equation~\eqref{eq:GenEq}. The method utilizes the CondG procedure to handle constraints and employs a Broyden update to approximate the Jacobian of~$f$. The algorithm is formally described below.

\begin{algorithm} \label{Alg:BroydenQNS}
\begin{footnotesize}
{\bf(Inexact Broyden Quasi-Newton Secant-Type Method)}
\begin{description}
\item[\textbf{Step 0}] Initialize with $x_{-1}, x_0 \in C$, a sequence $(\theta_k)_{k \in \mathbb{N}} \subset [0, +\infty)$, and $B_0 \in \mathscr{L}(\mathbb{R}^n, \mathbb{R}^n)$. Set $k = 0$.
\item[\textbf{Step 1}] If $0 \in f(x_k) + g(x_k) + F(x_k)$, \textbf{stop}; a solution has been found. Otherwise, compute $y_k \in \mathbb{R}^n$ satisfying
\begin{equation} \label{eq:aa0}
0 \in f(x_k) + g(x_k) + \left( B_k + [x_{k-1}, x_k; g] \right) ( y_k - x_k ) + F(y_k ).
\end{equation}
Update
\[
s_k := y_k - x_k, \qquad  \quad z_k := f(y_k) - f(x_k),
\]
and compute
\begin{equation} \label{eq:BroydenUpdateA1}
B_{k+1} := B_k + \frac{ ( z_k - B_k s_k ) s_k^\top }{ \| s_k \|^2 }.
\end{equation}
\item[\textbf{Step 2}] If $y_k \in C$, set $x_{k+1} = y_k$. Otherwise, compute $x_{k+1} \in P_C( y_k, x_k, \theta_k )$ using the CondG procedure.
\item[\textbf{Step 3}] Set $k \gets k + 1$ and return to \textbf{Step 1}.
\end{description}
\end{footnotesize}
\end{algorithm}

At each iteration, the Algorithm~\ref{Alg:BroydenQNS} checks whether the current point $x_k$ satisfies the inclusion defining the solution. If not, it computes an auxiliary point $y_k$ by solving an inclusion that involves a linear approximation of $f$ and $g$ at $x_k$, along with the operator $F$ at $y_k$. The Broyden update~\eqref{eq:BroydenUpdateA1} updates the approximation $B_k$ of the Jacobian of~$f$, ensuring that it satisfies the secant condition $B_{k+1} s_k = z_k$. If $y_k$ lies within the constraint set~$C$, it becomes the next iterate. If not, we compute an approximate projection of $y_k$ onto~$C$ using the CondG procedure, ensuring that the next iterate $x_{k+1}$ satisfies the inexact projection condition with tolerance~$\theta_k$. By iteratively applying this process, the algorithm generates a sequence $(x_k)_{k \in \mathbb{N}}$ within~$C$, which under appropriate conditions converges to a solution of~\eqref{eq:GenEq}.

\begin{remark}
When $g = 0$ and $F \equiv \{0\}$, Algorithm~\ref{Alg:BroydenQNS} reduces to the method studied in~\cite{MaxJefferson2017}. Additionally, if the constraint set is the entire space, i.e., $C = \mathbb{R}^n$, the algorithm becomes equivalent to the finite-dimensional version of the secant method proposed in~\cite{GeoffroyPietrus2004}.
\end{remark}

In the next section, we proceed with the analysis of the sequence  $(x_k)_{k \in \mathbb{N}}$, which we assume to be infinite. Note that, in this case we have $x_k\neq x_*$ and  $x_{k+1} \neq x_k$ for all $k$.
\subsection{Convergence analysis}

In this section, we analyze the convergence properties of Algorithm~\ref{Alg:BroydenQNS}. To establish the convergence of the algorithm, we first present some preliminary results that will be instrumental in our analysis. We then proceed to state and prove our main convergence theorem. We begin by recalling a theorem on perturbed metric regularity, which will play a key role in our analysis its proof can be found in \cite{DontchevAragon2014}.

\begin{theorem}[Perturbed Metric Regularity] \label{th:PertMetricReg}
Let $H: \mathbb{R}^n \rightrightarrows \mathbb{R}^m$ be a set-valued mapping with a closed graph, and suppose that at $(\bar{x}, \bar{y}) \in \operatorname{gph} H$, the mapping $H$ is metrically regular with constants $a > 0$, $b > 0$, and $\kappa > 0$. Let $\mu > 0$ be such that $\kappa \mu < 1$, and let $\kappa' > \kappa$. Then, for every $\alpha > 0$ and $\beta > 0$ satisfying
\begin{equation} \label{various}
    \alpha \leq {a}/{2}, \qquad \mu \alpha + 2 \beta \leq b, \qquad \text{and} \quad 2\kappa' \beta \leq \alpha(1 - \kappa \mu),
\end{equation}
and for every function $h: \mathbb{R}^n \to \mathbb{R}^m$ satisfying
\begin{equation} \label{condh1}
    \|h(\bar{x})\| \leq \beta, \quad \text{and} \quad \|h(x) - h(x')\| \leq \mu \|x - x'\|, \quad \forall\, x, x' \in B_\alpha(\bar{x}),
\end{equation}
the mapping $h + H$ has the following property: for every $y, y' \in B_\beta(\bar{y})$ and every $x \in (h + H)^{-1}(y) \cap B_\alpha(\bar{x})$, there exists $x' \in (h + H)^{-1}(y')$ such that
\begin{equation} \label{metlip}
    \|x - x'\| \leq \dfrac{\kappa'}{1 - \kappa \mu} \|y - y'\|.
\end{equation}
Moreover, if $H$ is \emph{strongly metrically regular} at $\bar{x}$ for $\bar{y}$—that is, the mapping $y \mapsto H^{-1}(y) \cap B_\alpha[\bar{x}]$ is single-valued and Lipschitz continuous on $B_b[\bar{y}]$ with Lipschitz constant $\kappa$—then under the same conditions on $\mu$, $\kappa'$, $\alpha$, and $\beta$, and for any function $h$ satisfying \eqref{condh1}, the mapping $y \mapsto (h + H)^{-1}(y) \cap B_\alpha[\bar{x}]$ is Lipschitz continuous on $B_\beta(\bar{y})$ with Lipschitz constant $\dfrac{\kappa'}{1 - \kappa \mu}$.
\end{theorem}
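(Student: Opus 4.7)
The plan is to reduce the statement to the Contraction Mapping Principle (Theorem~\ref{thm:contraction_principle}). Fix $y,y' \in B_\beta(\bar{y})$ and $x \in (h+H)^{-1}(y) \cap B_\alpha(\bar{x})$, and define the set-valued map
\[
\Phi(u) := H^{-1}\bigl(y' - h(u)\bigr).
\]
A fixed point $x'$ of $\Phi$ satisfies $y' - h(x') \in H(x')$, i.e.\ $x' \in (h+H)^{-1}(y')$, so producing such a fixed point close to $x$ is exactly what \eqref{metlip} requires. I would apply Theorem~\ref{thm:contraction_principle} centred at $x$ with contraction constant $\zeta := \kappa\mu < 1$ and radius $r := \frac{\kappa'}{1-\kappa\mu}\|y-y'\|$, observing that the three inequalities in \eqref{various} are exactly what will be needed to keep the forthcoming arguments legitimate.

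For the residual estimate (condition (a) of Theorem~\ref{thm:contraction_principle}), I would use that $y - h(x) \in H(x)$ together with the metric regularity of $H$ to obtain
\[
d\bigl(x,\Phi(x)\bigr) \leq \kappa\, d\bigl(y'-h(x),H(x)\bigr) \leq \kappa\|y-y'\| \leq \kappa'\|y-y'\| = r(1-\zeta).
\]
For the excess estimate (condition (b)), given $u,u' \in B_r[x]$ and $w \in \Phi(u) \cap B_r[x]$, the inclusion $y'-h(u) \in H(w)$ together with metric regularity at $w$ against the perturbed target $y'-h(u')$ yields
\[
d\bigl(w,\Phi(u')\bigr) \leq \kappa\, d\bigl(y'-h(u'),H(w)\bigr) \leq \kappa\|h(u)-h(u')\| \leq \kappa\mu\|u-u'\|,
\]
which is exactly $\zeta\|u-u'\|$; supremizing over $w$ gives the required excess bound. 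Closedness of $\operatorname{gph}\Phi$ on $B_r[x]\times B_r[x]$ comes from closedness of $\operatorname{gph} H$ together with continuity of $h$.

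The main obstacle is the bookkeeping: each invocation of the metric-regularity inequality $d(\cdot,H^{-1}(\cdot))\leq \kappa\, d(\cdot,H(\cdot))$ must take place inside $B_a[\bar{x}] \times B_b[\bar{y}]$, and $h$ must only be evaluated where its Lipschitz hypothesis is in force. A careful audit using \eqref{various} closes these checks: $2\kappa'\beta \leq \alpha(1-\kappa\mu)$ combined with $\|y-y'\|<2\beta$ forces $r \leq \alpha$, so $B_r[x]\subset B_{2\alpha}[\bar{x}] \subset B_a[\bar{x}]$ via $\alpha\leq a/2$; and the triangle inequality together with $\|h(\bar{x})\|\leq\beta$ and the Lipschitz bound on $h$ gives $\|y'-h(u)-\bar{y}\|\leq \beta + \mu\alpha + \beta \leq b$, so that every target of $H^{-1}$ lies in $B_b[\bar{y}]$. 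Once Theorem~\ref{thm:contraction_principle} delivers $x'\in B_r[x]$ with $x'\in\Phi(x')$, the bound $\|x-x'\|\leq r$ is precisely \eqref{metlip}. For the strong regularity addendum, single-valuedness of $y\mapsto H^{-1}(y)\cap B_\alpha[\bar{x}]$ makes $\Phi$ single-valued, so the uniqueness clause of Theorem~\ref{thm:contraction_principle} produces a unique fixed point in $B_r[x]$; applying this uniformly in $y'$ promotes $y\mapsto (h+H)^{-1}(y)\cap B_\alpha[\bar{x}]$ to a single-valued Lipschitz selection on $B_\beta(\bar{y})$ with constant $\kappa'/(1-\kappa\mu)$.
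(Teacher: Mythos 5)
Your strategy---applying the contraction mapping principle (Theorem~\ref{thm:contraction_principle}) to $\Phi(u)=H^{-1}\bigl(y'-h(u)\bigr)$ centred at $x$ with $\zeta=\kappa\mu$ and $r=\frac{\kappa'}{1-\kappa\mu}\|y-y'\|$---is exactly the argument behind this result; the paper itself gives no proof and simply cites \cite{DontchevAragon2014}, where the theorem is established this way. Your residual estimate for condition (a), the excess estimate for condition (b), the closedness of $\operatorname{gph}\Phi$, and the passage from the fixed point to \eqref{metlip} are all set up correctly.

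The gap is in the domain audit for condition (b). You correctly note that $B_r[x]\subset B_{2\alpha}[\bar x]\subset B_a[\bar x]$ (this is precisely why the hypothesis is $\alpha\le a/2$ rather than $\alpha\le a$), but you then invoke the Lipschitz bound on $h$ at points $u,u'\in B_r[x]$ and estimate $\|y'-h(u)-\bar y\|\le\beta+\mu\alpha+\beta\le b$. Both steps silently assume $\|u-\bar x\|\le\alpha$, whereas a point of $B_r[x]$ is only guaranteed to satisfy $\|u-\bar x\|\le r+\alpha\le 2\alpha$. With the hypotheses as literally stated in \eqref{condh1} and \eqref{various} ($h$ Lipschitz only on $B_\alpha(\bar x)$, and $\mu\alpha+2\beta\le b$), you cannot justify either $\|h(u)-h(u')\|\le\mu\|u-u'\|$ or $y'-h(u)\in B_b[\bar y]$ for all $u,u'\in B_r[x]$, so condition (b) does not close. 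The cure is to require $h$ to be $\mu$-Lipschitz on $B_{2\alpha}(\bar x)$ and to adjust the middle inequality of \eqref{various} accordingly (as in the cited source); in other words, the obstruction is a weakening in the transcription of the statement rather than a flaw in your plan, but a proof of the statement as written cannot proceed as you describe. A smaller point: for the strong-regularity addendum, uniqueness of the fixed point in $B_r[x]$ gives only local uniqueness; to get single-valuedness of $y\mapsto(h+H)^{-1}(y)\cap B_\alpha[\bar x]$ you should argue directly that two solutions $x_1,x_2\in B_\alpha[\bar x]$ for targets $y,y'$ satisfy $\|x_1-x_2\|\le\kappa\|y-y'\|+\kappa\mu\|x_1-x_2\|$, which yields both single-valuedness (take $y=y'$) and the stated Lipschitz constant.
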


Assuming that $h(\bar{x}) = 0$, Theorem~\ref{th:PertMetricReg} indicates that if $H$ is (strongly) metrically regular at $\bar{x}$ for $\bar{y}$ and $h$ has a sufficiently small Lipschitz constant, then the perturbed mapping $h + H$ is also (strongly) metrically regular at $\bar{x}$ for $\bar{y}$. Specifically, the estimate \eqref{metlip} implies that $(h + H)^{-1}$ possesses the Aubin property at $\bar{y}$ for $\bar{x}$, which is equivalent to the metric regularity of $h + H$ at $\bar{x}$ for $\bar{y}$. Consequently,  it  recover the (extended) Lyusternik-Graves theorem as stated in \cite[Theorem~5E.1]{DontchevRockafellar2013}. For the strong regularity aspect, is  obtained a version of Robinson's theorem, as detailed in \cite[Theorem~5F.1]{DontchevRockafellar2013}.

To proceed with the analysis of Algorithm~\ref{Alg:BroydenQNS}, we assume throughout our presentation that the following assumptions hold:

\begin{itemize}
    \item[{\bf (A0)}] The derivative $f'$ is Lipschitz continuous with constant $L > 0$ on $\Omega$, i.e.,
    \begin{equation} \label{eq:LipContf}
        \|f'(x) - f'(y)\| \leq L \|x - y\|, \quad \forall\, x, y \in \Omega.
    \end{equation}
    \item[{\bf (A1)}] There exists $M > 0$ such that
    \begin{equation} \label{eq:LipContg}
        \|[x, y, z; g]\| \leq M, \quad \forall\, x, y, z \in \Omega,
    \end{equation}
    where $[x, y, z; g]$ denotes the second-order divided difference of $g$.
    \item[{\bf (A2)}] The mapping $f + g + F$ is metrically regular at $x_{*}$ for $0$ with constants  \textcolor{blue}{$\kappa > 0$}, $a > 0$, and $b > 0$.
\end{itemize}

We begin by establishing some technical results that will be useful in our analysis.

\begin{lemma} \label{Le:taylor}
Let $f: \mathbb{R}^n \to \mathbb{R}^n$ be a continuously differentiable function such that $f'$ satisfies {\bf (A0)}. Let $r > 0$ and $\epsilon > 0$ be such that $2L r < \epsilon$. Then, for all $x, y \in B_{r}(x_*)$, the following inequalities hold:
\begin{equation} \label{eq:taylor1}
    \|f(y) - f(x) - f'(x)(y - x)\| \leq \dfrac{L}{2} \|y - x\|^2,
\end{equation}
and
\begin{equation} \label{eq:taylor2}
    \|f(y) - f(x) - f'(x_*)(y - x)\| \leq \epsilon \|y - x\|.
\end{equation}
\end{lemma}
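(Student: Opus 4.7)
The plan is to prove both bounds by invoking the fundamental theorem of calculus to express $f(y)-f(x)$ as an integral of $f'$ along the line segment from $x$ to $y$, which is permissible since $B_r(x_*) \subset \Omega$ is convex and $f'$ is defined on $\Omega$.

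For \eqref{eq:taylor1}, I would write
\[
f(y) - f(x) - f'(x)(y-x) \;=\; \int_0^1 \bigl[f'(x + t(y-x)) - f'(x)\bigr](y-x)\,dt,
\]
take norms, and apply assumption \textbf{(A0)} to bound the integrand by $Lt\|y-x\|^2$. Integrating $t$ from $0$ to $1$ yields the factor $1/2$, giving the claimed estimate. This is the standard Lipschitz-gradient descent lemma and presents no obstacle.

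For \eqref{eq:taylor2}, the key idea is to split
\[
f(y) - f(x) - f'(x_*)(y-x) \;=\; \bigl[f(y) - f(x) - f'(x)(y-x)\bigr] + \bigl[f'(x) - f'(x_*)\bigr](y-x).
\]
I would bound the first bracket by $\tfrac{L}{2}\|y-x\|^2$ using \eqref{eq:taylor1}, and the second by $L\|x - x_*\|\cdot\|y-x\|$ using \textbf{(A0)} directly. Then, since both $x$ and $y$ lie in $B_r(x_*)$, I have $\|y-x\| \leq 2r$ and $\|x-x_*\| < r$, so the sum is bounded by $Lr\|y-x\| + Lr\|y-x\| = 2Lr\|y-x\|$, which is strictly less than $\epsilon\|y-x\|$ by the hypothesis $2Lr < \epsilon$.

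There is no significant obstacle here; the only thing to be careful about is ensuring the line segment $[x,y]$ lies inside $\Omega$ so that the integral representation and the Lipschitz bound on $f'$ apply, which follows from the convexity of $B_r(x_*)$ together with the implicit assumption $B_r(x_*) \subset \Omega$.
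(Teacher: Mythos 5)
Your proposal is correct and follows essentially the same route as the paper: the integral (Taylor with remainder) representation for \eqref{eq:taylor1}, and for \eqref{eq:taylor2} the identical decomposition into $f(y)-f(x)-f'(x)(y-x)$ plus $[f'(x)-f'(x_*)](y-x)$, bounded via \eqref{eq:taylor1}, \textbf{(A0)}, $\|y-x\|\leq 2r$, $\|x-x_*\|<r$, and $2Lr<\epsilon$. No gaps.
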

\begin{proof}
Since $f'$ is Lipschitz continuous with constant $L$, by applying  Taylor's theorem with integral the equality  \eqref{eq:taylor1} follows. For prove \eqref{eq:taylor2}, we have
\begin{align*}
\|f(y) - f(x) - f'(x_*)(y - x)\| &\leq \|f(y) - f(x) - f'(x)(y - x)\| + \|[f'(x) - f'(x_*)](y - x)\| \\
&\leq \big(({L}{/2}) \|y - x\| + L \|x - x_*\| \big) \|y - x\| \\
&\leq \epsilon \|y - x\|,
\end{align*}
where the last inequality holds because $\|y - x\| \leq 2 r$ and $2 L r < \epsilon$.
\end{proof}

The following proposition shows that the operator defined in \eqref{eq:BroydenUpdateA1} satisfies a boundedness condition that will be useful for our analysis. Its proof can be found in \cite{DontchevAragon2014}.

\begin{proposition} \label{pro:bounddeterioration}
Let $f: \mathbb{R}^n \to \mathbb{R}^n$ be a continuously differentiable function such that $f'$ satisfies {\bf (A0)}. Assume that $x_* \in \Omega$. Given $B_k \in \mathscr{L}(\mathbb{R}^n, \mathbb{R}^n)$ and $x_k, y_k \in \Omega$ with $y_k \neq x_k$, the operator $B_{k+1}$ defined as in \eqref{eq:BroydenUpdateA1} satisfies
\begin{equation} \label{detcond}
    \|B_{k+1} - f'(x_*)\| \leq \|B_k - f'(x_*)\| + \dfrac{L}{2} \left( \|y_k - x_*\| + \|x_k - x_*\| \right).
\end{equation}
\end{proposition}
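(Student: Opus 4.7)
The plan is to use the classical ``bounded deterioration'' trick for Broyden's update. I would start by subtracting $f'(x_*)$ from both sides of the update rule \eqref{eq:BroydenUpdateA1} and inserting $\pm f'(x_*) s_k$ inside the rank-one correction's numerator. Since $z_k - B_k s_k = [f(y_k) - f(x_k) - f'(x_*)s_k] - (B_k - f'(x_*))s_k$, a direct rearrangement yields
\begin{equation*}
B_{k+1} - f'(x_*) = \bigl(B_k - f'(x_*)\bigr)\!\left(I - \frac{s_k s_k^\top}{\|s_k\|^2}\right) + \frac{\bigl(f(y_k) - f(x_k) - f'(x_*)s_k\bigr)\, s_k^\top}{\|s_k\|^2}.
\end{equation*}

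Next, I would take norms. The operator $I - s_k s_k^\top/\|s_k\|^2$ is the orthogonal projector onto the hyperplane $s_k^\perp$, so its spectral norm is at most $1$. For the rank-one remainder, the submultiplicativity of the operator norm together with $\|s_k s_k^\top\| = \|s_k\|^2$ gives the estimate
\begin{equation*}
\|B_{k+1} - f'(x_*)\| \leq \|B_k - f'(x_*)\| + \frac{\|f(y_k) - f(x_k) - f'(x_*)s_k\|}{\|s_k\|}.
\end{equation*}

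It then remains to control the residual $\|f(y_k) - f(x_k) - f'(x_*)s_k\|$. Writing $f(y_k) - f(x_k) = \int_0^1 f'(x_k + t s_k)\, s_k\, dt$ and using assumption \textbf{(A0)}, I bound the integrand by $L\, \|(1-t)x_k + t y_k - x_*\|\, \|s_k\|$, which after splitting by convexity and integrating yields
\begin{equation*}
\|f(y_k) - f(x_k) - f'(x_*)s_k\| \leq \tfrac{L}{2} \bigl(\|y_k - x_*\| + \|x_k - x_*\|\bigr)\, \|s_k\|.
\end{equation*}
Dividing by $\|s_k\|$ (which is legal since $y_k \neq x_k$) and substituting back gives exactly \eqref{detcond}.

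The main subtlety is the algebraic rewriting in the first step: one must identify the correct $\pm f'(x_*)s_k$ splitting so that the $(B_k - f'(x_*))$ factor appears multiplied by the orthogonal projector $I - s_ks_k^\top/\|s_k\|^2$, because it is this projector having norm $\leq 1$ that produces the non-amplifying coefficient on $\|B_k - f'(x_*)\|$. Once this decomposition is in place, the rest is a routine Taylor-type estimate from \textbf{(A0)}.
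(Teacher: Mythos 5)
Your proof is correct: the decomposition of $B_{k+1}-f'(x_*)$ into $(B_k-f'(x_*))$ times the orthogonal projector onto $s_k^\perp$ plus the rank-one Taylor residual, followed by the integral estimate $\|f(y_k)-f(x_k)-f'(x_*)s_k\|\le \tfrac{L}{2}(\|y_k-x_*\|+\|x_k-x_*\|)\|s_k\|$, is exactly the classical bounded-deterioration argument. The paper itself does not prove this proposition but defers to the cited reference \cite{DontchevAragon2014}, where the same standard argument is used, so your proposal fills the gap with essentially the intended proof.
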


With these preliminary results in hand, we now state and prove our main convergence theorem for Algorithm~\ref{Alg:BroydenQNS}.

\begin{theorem} \label{thm:convergence}
Let $\Omega \subseteq \mathbb{R}^n$ be an open set, $C \subseteq \Omega$ be a closed convex set, and let $f: \Omega \to \mathbb{R}^n$ be a continuously differentiable function. Let $g: \Omega \to \mathbb{R}^n$ be a continuous function (not necessarily differentiable), and let $F: \mathbb{R}^n \rightrightarrows \mathbb{R}^n$ be a set-valued mapping with closed graph. Suppose that $x_*$ is a solution to the generalized equation
\begin{equation} \label{eq:GenEq}
 f(x) + g(x) + F(x)\ni 0.
\end{equation}
Assume that conditions {\bf (A0)}, {\bf (A1)}, and {\bf (A2)} hold. Take  the initial approximation $B_0$ satisfying 
\begin{equation} \label{eq:BundkapB0}
\delta:=\|B_0 - f'(x_*)\| < \dfrac{1}{2 \kappa}.
\end{equation} 
Let $(\theta_k)_{k \in \mathbb{N}} \subset [0, +\infty)$ be a sequence with $\hat{\theta} = \sup_{k \in \mathbb{N}} \theta_k < \dfrac{1}{2}$. Then, there exists $\tau > 0$ such that, for any initial points $x_{-1}, x_0 \in B_{\tau}[x_*] \cap C$, there exists a  sequence $(x_k)_{k \in \mathbb{N}} \subset B_{\tau}[x_*] \cap C$ generated by Algorithm~\ref{Alg:BroydenQNS} with starting points $x_{-1}, x_0$ converges $q$-linearly to $x_*$. If, in addition, $f + g + F$ is strongly metrically regular at $x_*$ for $0$, then the sequence $(x_k)_{k \in \mathbb{N}}$ starting from $x_{-1}, x_0$ is unique and converges $q$-linearly to $x_*$.
\end{theorem}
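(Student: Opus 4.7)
The plan is to apply the perturbed metric regularity theorem (Theorem~\ref{th:PertMetricReg}) to the partial linearization in \eqref{eq:aa0} at each iteration, propagate the resulting linear error through the inexact projection via Lemma~\ref{pr:condi}, and maintain the Broyden approximation via Proposition~\ref{pro:bounddeterioration}, all inside a single induction on $k$. Set $H := f + g + F$ and rewrite \eqref{eq:aa0} as $0 \in (H + h_k)(y_k)$ with residual
\[
h_k(y) := f(x_k) - f(y) + g(x_k) - g(y) + \bigl(B_k + [x_{k-1}, x_k; g]\bigr)(y - x_k).
\]
The inductive hypothesis will state that $x_{-1}, \dots, x_k \in B_\tau[x_*]\cap C$, that $\|x_j - x_*\| \le \rho\|x_{j-1} - x_*\|$ for $0 \le j \le k$, and that $\|B_j - f'(x_*)\| \le 2\delta$.

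The core estimates are on $\|h_k(x_*)\|$ and on a Lipschitz constant $\mu_k$ of $h_k$ on $B_\alpha(x_*)$. The first uses Lemma~\ref{Le:taylor} after inserting $\pm f'(x_*)(x_*-x_k)$, together with the identity \eqref{eq:SecOrderDividedDif} which, combined with $g(x_*)-g(x_k)=[x_k,x_*;g](x_*-x_k)$, recasts $g(x_*)-g(x_k)-[x_{k-1},x_k;g](x_*-x_k)$ as $[x_{k-1},x_k,x_*;g](x_*-x_{k-1})(x_*-x_k)$, bounded through \textbf{(A1)}. The outcome is
\[
\|h_k(x_*)\| \le \Bigl(\tfrac{L}{2}\|x_k - x_*\| + \|B_k - f'(x_*)\| + M\|x_{k-1} - x_*\|\Bigr)\|x_k - x_*\|.
\]
A similar calculation, chaining two applications of \eqref{eq:SecOrderDividedDif} to compare $[x_{k-1},x_k;g]$ with a generic $[y',y;g]$, yields $\mu_k \le \|B_k - f'(x_*)\| + 2L\alpha + 2M(\alpha + \tau)$. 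The gap $1 - 2\kappa\delta > 0$ provided by \eqref{eq:BundkapB0} then lets one pre-select radii $\tau, \alpha, \beta$ (and a constant $\kappa' > \kappa$) so that the hypotheses \eqref{various} are met with $\kappa\mu_k < 1$ uniformly on the induction set.

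Applying Theorem~\ref{th:PertMetricReg} with $y = h_k(x_*)$ and $y' = 0$ produces $y_k$ with $\|y_k - x_*\| \le \tfrac{\kappa'}{1 - \kappa\mu_k}\|h_k(x_*)\|$, and Lemma~\ref{pr:condi} with $\tilde y = \tilde x = x_*$ (using $P_C(x_*) = \{x_*\}$) gives $\|x_{k+1} - x_*\| \le \|y_k - x_*\| + \sqrt{2\theta_k}\bigl(\|y_k - x_*\| + \|x_k - x_*\|\bigr)$. Because $\hat\theta < 1/2$ forces $\sqrt{2\hat\theta} < 1$, the overall contraction factor has the form $\rho = \sqrt{2\hat\theta} + O(\tau)$, which is strictly below $1$ once $\tau$ is small. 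The induction closes via Proposition~\ref{pro:bounddeterioration}: geometric decay of $\|x_j - x_*\|$ makes $\sum_{j\le k}(\|y_j - x_*\| + \|x_j - x_*\|)$ of order $\tau/(1-\rho)$, so $\tau$ can be tightened to force $\|B_{k+1} - f'(x_*)\| \le 2\delta$. For uniqueness under strong metric regularity, the corresponding clause of Theorem~\ref{th:PertMetricReg} renders $(H + h_k)^{-1}(0) \cap B_\alpha[x_*]$ a singleton, fixing $y_k$ uniquely, and the CondG routine then produces a deterministic $x_{k+1}$.

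The main obstacle is orchestrating the chain of parameter choices $(\tau, \alpha, \beta, \kappa')$ so that the three conditions, namely $\kappa\mu_k < 1$, $\rho < 1$, and $\|B_k - f'(x_*)\| \le 2\delta$, hold \emph{simultaneously} along the whole induction. The strict inequality $\delta < 1/(2\kappa)$ is essential, since the slack $1 - 2\kappa\delta$ must absorb both the perturbation $\kappa(2L\alpha + 2M(\alpha+\tau))$ appearing in $\kappa\mu_k$ and the cumulative Broyden deterioration $(L/2)\sum_j(\|y_j - x_*\| + \|x_j - x_*\|)$ arising from \eqref{detcond}.
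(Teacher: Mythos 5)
Your overall architecture coincides with the paper's: one application of Theorem~\ref{th:PertMetricReg} per iteration to produce $y_k$ with $\|y_k-x_*\|\le\tfrac{\kappa'}{1-\kappa\mu_k}\|h_k(x_*)\|$, Lemma~\ref{pr:condi} with $\tilde y=\tilde x=x_*$ to pass to $x_{k+1}$, and Proposition~\ref{pro:bounddeterioration} plus geometric decay to control $\|B_k-f'(x_*)\|$, all inside one induction. The only structural difference is that you perturb $H=f+g+F$ directly by the full nonlinear residual $h_k$, whereas the paper first establishes metric regularity of the reference linearization $G(x)=f(x_*)+g(x_*)+\bigl(f'(x_*)+[z,x_*;g]\bigr)(x-x_*)+F(x)$ and then perturbs $G$ by an affine gap; your one-stage version is legitimate, and your estimates for $\|h_k(x_*)\|$ and for the Lipschitz constant $\mu_k$ are correct.

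There is, however, a genuine gap in how you close the contraction. The factor multiplying $\|x_k-x_*\|$ is
\[
(1+\sqrt{2\theta_k})\,\frac{\kappa'}{1-\kappa\mu_k}\Bigl(\|B_k-f'(x_*)\|+\tfrac{L}{2}\|x_k-x_*\|+M\|x_{k-1}-x_*\|\Bigr)+\sqrt{2\theta_k},
\]
and the piece $\frac{\kappa'\|B_k-f'(x_*)\|}{1-\kappa\mu_k}$ does \emph{not} vanish as $\tau\to0$: already at $k=0$ the best available bound is essentially $\frac{\kappa'\delta}{1-\kappa\delta}$. Your claim that the contraction factor is $\rho=\sqrt{2\hat\theta}+O(\tau)$ is therefore false; the $\delta$-dependent term survives, and this is exactly where \eqref{eq:BundkapB0} enters (it is what makes $\frac{\kappa'\delta}{1-\kappa\delta}<1$ achievable for $\kappa'$ close to $\kappa$). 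Worse, your inductive bound $\|B_j-f'(x_*)\|\le 2\delta$ is too loose: it pushes the surviving term up to $\frac{2\kappa'\delta}{1-2\kappa\delta}$, which blows up as $\delta$ approaches $\frac{1}{2\kappa}$ and need not be below $1$, so the induction does not close near the threshold allowed by the theorem. You need the sharper bound $\|B_j-f'(x_*)\|\le\delta+\eta$ with $\eta$ tunable via $\tau$ (the paper takes $\eta=(L/2)a/(1-\gamma)$), and you must carry the exact requirement $(1+\sqrt{2\hat\theta})\frac{\kappa'(\delta+\eta+\cdots)}{1-\kappa\mu}+\sqrt{2\hat\theta}\le\gamma<1$ explicitly, as the paper does in \eqref{eq:csepxmuc}; note that even in that form the inequality couples $\delta$ with $\hat\theta$, a delicacy that your $O(\tau)$ bookkeeping hides entirely.
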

\begin{proof}   Let  $L>0$ satisfying {\bf (A0)}, $M>0$ satisfying  {\bf (A1)} and $\kappa \geq 0$, $a>0$, and $b>0$ satisfying {\bf (A2)}. Choose $\kappa' > \kappa$ and  $\gamma > 0$ such that
\[
\frac{\kappa' \delta}{1 - \kappa \delta} < 1, \qquad \quad  \frac{\kappa' \delta}{1 - \kappa \delta} < \gamma < 1.
\]
To proceed with our analysis it is convenient to note the  function $(0, +\infty) \ni t \mapsto  {\kappa' t}/{(1 - \kappa t)}$ is strictly increasing. Then, we can choose a sufficiently small  $\epsilon > 0$  that satisfies the following two conditions
\begin{equation} \label{eq:gamma_condition}
 \epsilon<\delta, \qquad \quad  \frac{\kappa'}{1 - \kappa (\delta + \epsilon)} (\delta + \epsilon) < \gamma.
\end{equation}
In addition, considering that $\delta>0$ satisfies   \eqref{eq:BundkapB0}, without loss of generality, we can also assume that  $a>0$ and $\tau >0$  satisfy   the following three  conditions 

\begin{equation} \label{eq:csepx1}
a<\frac{\epsilon}{2L}, \qquad  \qquad \kappa \left(\delta + \frac{(L/2)a}{1-\gamma} + M\tau  \right) < 1.
\end{equation}

\begin{equation} \label{eq:csepx2}
\dfrac{\kappa'}{1 - \kappa \left( \delta + \dfrac{(L/2)a}{1 - \gamma} + M\tau \right)}\left(\delta+ \epsilon  + \dfrac{(L/2)a}{1 - \gamma} + M \tau \right) < \gamma.
\end{equation}
It is also convenient  to define the constant 
\begin{equation} \label{eq:csepxmu}
\mu := \delta + \frac{(L/2) a}{1 - \gamma} + M \tau .
\end{equation}
Choose  $\alpha > 0$ and $\beta > 0$ satisfying  \eqref{various} in Theorem~\ref{th:PertMetricReg}, i.e., satisfying following inequalities
\begin{equation} \label{eq:csepxtrm}
\epsilon + 3\alpha M \leq \mu, \qquad 0< \alpha \leq \frac{a}{2}, \qquad \mu \alpha + 2 \beta \leq b, \qquad 2 \kappa' \beta \leq \alpha (1 - \kappa \mu). 
\end{equation}
Finally, taking into account that  $\beta > 0$ , we can again make $\tau>0$ sufficiently small such that 
\begin{equation} \label{eq:csepxmuf}
0< \tau< \frac{1}{M} \left[ \frac{(\gamma - \sqrt{2 \theta_0})(1 - \kappa \mu)}{(1 + \sqrt{2 \theta_0}) \kappa'} - (\epsilon + \delta) \right], \qquad \qquad \left( \epsilon + \delta + \dfrac{(L/2)a}{1 - \gamma} + M\tau\right) \tau\leq \beta.
\end{equation}
Before  continuing with our analysis let us note that the first two inequalities in \eqref{eq:csepxmuf} implies that 
\begin{equation} \label{eq:csepxmuc}
\dfrac{\kappa'}{1 - \kappa \mu}\Big( \epsilon +  \delta+\frac{(L/2) a}{1 - \gamma}+ M\tau\Big)\left( 1 + \sqrt{2\theta_0}\right) + \sqrt{2\theta_0} \leq \gamma.
\end{equation}
Consider 
  \begin{equation}  \label{eq:Auxh}
h(x) = f(x_*) + g(x_*) + \left[ f'(x_*) + [z, x_*; g] \right](x - x_*) - f(x) - g(x),
\end{equation}
Note that $h(x_*) = 0$ and,   for all  $ x, x', z \in B_{\alpha}[x_*]$,  we have 
\begin{align} 
\| h(x) - h(x') \| & = \| f(x') + g(x')  - f(x) - g(x) + (f'(x_*) + [z, x_*; g])(x - x') \| \notag\\
& \leq \| f(x') - f(x) - f'(x_*)(x' - x) \| + \| g(x') - g(x) - [z, x_*; g] (x' - x) \|. \label{eq:fternpf}
\end{align}
To bound the first term in \eqref{eq:fternpf} we note that  \eqref{eq:taylor2} in Lemma \ref{Le:taylor} implies that 
  \begin{equation}  \label{eq:finqep}
		\|f(x')  - f(x) - f(x_*) (x' - x)\| \leq  \epsilon \|x' - x\|. 
\end{equation}
Now, we are going to bound the second  term  \eqref{eq:fternpf}. For that, first note that 
 \begin{align*} 
		\|g(x') - g(x) - [z, x_*; g](x' - x) \| & = |[x, x'; g](x' - x) - [z, x_*; g](x' - x)\| \\
        & = \|\left( [x, x'; g] - [z, x_*; g]\right)(x' - x)\| \\
        & = \|\left( [x, x'; g] - [z, x; g] - \left(  [z, x_*; g] - [z, x; g] \right)\right) (x' - x)\| 
	\end{align*}
Considering that $ [z, x_*; g]- [z, x; g]= [z, x_*; g]-[x, z; g]$, the last inequality implies that 
 \begin{align*} 
\|g(x') - g(x) - [z, x_*; g](x' - x) \|  & = \|\left( [x, x'; g] - [z, x; g] - \left(  [z, x_*; g] - [x, z; g] \right)\right) (x' - x)\|.
 \end{align*}	
Thus,  by  using \eqref{eq:SecOrderDividedDif} and  \textbf{A1}, and taking into account that $ x, x', z \in B_{\alpha}[x_*]$,  we conclude that 
 \begin{align*} 
\|g(x') - g(x) - [z, x_*; g](x' - x) \|   & = \|\left( [z, x, x'; g](x' - z) - [x, z, x_*; g](x_* - x)\right) (x' - x)\| \\
        & \leq \left(\| [z, x, x'; g] \| \|x' - z\| +  \|[x, z, x_*; g]\| \|x_* - x\| \right) \|x' - x)\| \\
        & \leq 3\alpha M \|x' - x\|.
	\end{align*}	
Hence,  using \eqref{eq:finqep} and  the last inequality we obtain  from \eqref{eq:fternpf} and the first inequality in \eqref{eq:csepxtrm} that 
 \begin{equation} \label{eq: fcmth}
\| h(x) - h(x') \|  \leq   (\epsilon + 3\alpha M) \|x' - x\| \leq \mu \|x' - x\|.\qquad \forall x, x' \in B_{\alpha}[x_*].
\end{equation}
To simplify the notation define  the mapping
\begin{equation} \label{eq:H_definition}
G(x) = f(x_*) + g(x_*) + \left[ f'(x_*) + [z, x_*; g] \right](x - x_*) + F(x).
\end{equation}
It follows from \eqref{eq:Auxh} and \eqref{eq:H_definition}  that  $G=H+h$, where $H=f+g+F$. It follows from  {\bf (A2)} that $H$ is  is metrically regular at $x_{*}$ for $0$ with constants $\kappa > 0$, $a > 0$, and $b > 0$. Considering that $h(x_*) = 0$, using \eqref{eq:csepxtrm} and \eqref{eq: fcmth} we can apply  Theorem~\ref{th:PertMetricReg} with ${\bar x}$  we conclude that $G$ is metrically regular at $x_*$ for $0$ with constant $\kappa$. Therefore, there exist positive constants $a$ and $b$ such that
\[
d(x, G^{-1}(y)) \leq \kappa d(y, G(x)), \quad \forall x \in B_a[x_*], \quad \forall y \in B_b[0].
\]
Let $O := B_{\tau}[x_*]$, and choose any  points $x_0, z \in O \cap C$ with $x_0\neq  z$, $x_0 \neq x_*$ and $z \neq x_*$. Define
\begin{equation} \label{eq:h0_definition}
h_0(x) := f(x_0) + g(x_0) + (B_0 + [z, x_0; g])(x - x_0) - f(x_*) - g(x_*) - (f'(x_*) + [z, x_*; g])(x - x_*),
\end{equation}
and $w_0 := f(x_0) + g(x_0) - f(x_*) - g(x_*) + (B_0 + [z, x_0; g])(x_* - x_0)$. Then, we have 
\[
w_0 = h_0(x_*).
\]
We proceed to estimate $\|h_0(x_*)\|$. For that,  using \eqref{eq:h0_definition}, some manipulations show that 
\begin{multline}  \label{eq:h0_estimate}
\|h_0(x_*)\| \leq \|f(x_0) - f(x_*) - f'(x_*)(x_0 - x_*)\| + \| (B_0 - f'(x_*))(x_* - x_0) \| + \\\| g(x_0) - g(x_*) + [z, x_0; g](x_* - x_0) \|.
\end{multline}
Let us first bound the right hand side of the last inequality. For the first tern, using \eqref{eq:taylor2} in Lemma \ref{Le:taylor}, we have
\[
\| f(x_0) - f(x_*) - f'(x_*)(x_0 - x_*) \| \leq \epsilon \| x_0 - x_* \|.
\]
We bound the second tern, by using  definition of $\delta$ in  \eqref{eq:BundkapB0}, which gives 
\[
\| (B_0 - f'(x_*))(x_* - x_0) \| \leq \delta \| x_0 - x_* \|.
\]
For the term involving $g$, we use the definition of the second-order divided difference and assumption \textbf{(A1)}. Thus, we have 
\begin{align*}
\| g(x_0) - g(x_*) + [z, x_0; g](x_* - x_0) \| & = \| [x_0, x_*; g](x_* - x_0) - [z, x_0; g](x_* - x_0) \| \\
& = \| [z, x_0, x_*; g](x_* - z)(x_* - x_0) \| \\
& \leq M \| x_* - z \| \| x_0 - x_* \| \leq M \tau \| x_0 - x_* \|.
\end{align*}
Combining these estimates and taking into account that $x_0\in  O \cap C$,  the inequality \eqref{eq:h0_estimate} yields
\begin{equation}  \label{eq:auxmthz}
\| h_0(x_*) \| \leq \left( \epsilon + \delta + M \tau \right) \| x_0 - x_* \|\leq \left( \epsilon + \delta + M \tau \right) \tau \leq \left( \epsilon + \delta + \dfrac{(L/2)a}{1 - \gamma} + M\tau\right) \tau 
\end{equation}
Thus, it follows from \eqref{eq:csepxmuf} that 
\[
\| h_0(x_*) \| \leq \beta.
\]
Hence $w_0 = h_0(x_*) \in B_{\beta}[0]$. Similarly,  give $x, x' \in B_{\alpha}(x_*)$ and considering $z = x_{-1}$,  by using definition \eqref{eq:h0_definition},  some algebraic manipulations imply that 
\begin{align*}
\| h_0(x) - h_0(x') \| & = \| (B_0 + [z, x_0; g])(x - x') - (f'(x_*) + [z, x_*; g])(x - x') \| \\
& \leq \| (B_0 - f'(x_*))(x - x') \| + \| [z, x_0; g] - [z, x_*; g] \| \| x - x' \|.
\end{align*}
Thus, after some calculations and using Definition~\ref{def:divided_difference_second}  we conclude that 
\begin{align*}
 \| h_0(x) - h_0(x') \| & \leq \| (B_0 - f'(x_*))(x - x') \| + \| [z, x_0; g] - [z, x_*; g] \| \| x - x' \| \\
& = \| (B_0 - f'(x_*))(x - x') \| + \| [z, x_0; g] - [x_*, z; g] + [x_*, z; g] -[z, x_*; g] \| \| x - x' \| \\
& = \| (B_0 - f'(x_*))(x - x') \| + \| [x_*, z, x_0; g](x_0 - x_*) + [z, x_*, z; g](z-z)\| \| x - x' \| \\
& \leq \| (B_0 - f'(x_*))(x - x') \| + \| [x_*, z, x_0; g]\| \|x_0 - x_*\| \| x - x' \| 
\end{align*}
Since $ [z, x_*, z; g](z-z)=0$,  using properties of the norm we obtain  from the last inequality that 
\begin{align*}
 \| h_0(x) - h_0(x') \|  \leq \| (B_0 - f'(x_*))(x - x') \| + \| [x_*, z, x_0; g]\| \|x_0 - x_*\| \| x - x' \| 
\end{align*}
Finally, using \eqref{eq:BundkapB0}, assumption {\bf (A1)} and  definition of $\mu$ in \eqref{eq:csepxmu}, we  conclude that 
\begin{equation*}
\| h_0(x) - h_0(x') \|  \leq \delta \| x - x' \| + M \tau \| x - x' \|  \leq \mu \| x - x' \|, \qquad \forall x, x' \in B_{\alpha}(x_*).
\end{equation*}

By Theorem~\ref{th:PertMetricReg}, with ${\bar x}=x_*$, $h = h_0$, $H=G$, $y=w_0$ and $x=x_*$, there exists $y_0 \in (h + G)^{-1}(0)$ satisfying \eqref{eq:aa0}, for $k = 0$, and
\[
\| y_0 - x_* \| \leq \frac{\kappa'}{1 - \kappa \mu} \| h_0(x_*) \|.
\]
It follows from the first inequality in  \eqref{eq:auxmthz} and the last inequality, by taking into account \eqref{eq:csepx2} and \eqref{eq:csepxmu}, that 
\begin{equation} \label{eq:bgg}
\| y_0 - x_* \|\leq \frac{\kappa'}{1 - \kappa \mu} \left( \epsilon + \delta +  M\tau\right) \|x_0 - x_*\| \leq  \gamma \| x_0 - x_* \|.
\end{equation}
Therefore, considering thta  $\gamma < 1$, we  conclude that  $y_0 \in O$. Next, applying Lemma~\ref{pr:condi} with $u = x_1$, $y = y_0$, $x = x_0$, $\tilde{y} = x_*$, $\tilde{x} = x_*$, and $\theta = \theta_0$,  we obtain that 
\begin{align*}
\| x_1 - x_* \| & \leq \| y_0 - x_* \| + \sqrt{2 \theta_0} \| y_0 - x_0 \| \\
& \leq \| y_0 - x_* \| + \sqrt{2 \theta_0} ( \| y_0 - x_* \| + \| x_* - x_0 \| ) \\
& \leq (1 + \sqrt{2 \theta_0}) \| y_0 - x_* \| + \sqrt{2 \theta_0} \| x_0 - x_* \|.
\end{align*}
Using  inequality \eqref{eq:csepxmuc} and \eqref{eq:bgg} we conclude that 
\begin{equation*}
\| x_1 - x_* \| \leq \left[ \frac{\kappa'}{1 - \kappa \mu} (\epsilon + \delta + M \tau) (1 + \sqrt{2 \theta_0}) + \sqrt{2 \theta_0} \right] \| x_0 - x_* \|  \leq \gamma \| x_0 - x_* \|.
\end{equation*}
Therefore, since $\gamma < 1$, we obtain that $x_1 \in O \cap C$.

By induction, suppose that for some $n \geq 1$, we have points $x_k \in O \cap C$, $y_k \in O$, satisfying
\[
\| y_{k-1} - x_* \| \leq \gamma \| x_{k-1} - x_* \|, \quad \qquad  \| x_k - x_* \| \leq \gamma \| x_{k-1} - x_* \|, \quad \qquad  k = 1, \dots, n.
\]
Thus, it follows from Proposition \ref{pro:bounddeterioration} together with the two last inequalities, that 
\begin{align*}
\| B_n - f'(x_*) \| & \leq \| B_0 - f'(x_*) \| + \dfrac{L}{2} \Big(\|x_0 - x_*\| + \sum_{k = 1}^{n} ( \| x_k - x_* \|) + \sum_{k = 1}^{n} \| y_{k-1} - x_* \| )\Big) \\
& \leq \delta + \dfrac{L}{2}\|x_0 - x_*\| + L \sum_{k = 1}^{n} \gamma^k \| x_0 - x_* \|  \leq \delta + L \sum_{k = 0}^{n} \gamma^k \| x_0 - x_* \|  \leq \delta + \frac{L \| x_0 - x_* \|}{1 - \gamma}.
\end{align*}
Hence, using \eqref{eq:csepxtrm}, we conclude that  
$$
\| B_n - f'(x_*) \| \leq \delta + \frac{(L/2) a}{1 - \gamma}.
$$
Consider $z = x_{-1}$ and  define
\[
h_n(x) := f(x_n) + g(x_n) + (B_n + [z, x_n; g])(x - x_n) - f(x_*) - g(x_*) - (f'(x_*) + [z, x_*; g])(x - x_*),
\]
and
\[
w_n := f(x_n) + g(x_n) - f(x_*) - g(x_*) + (B_n + [z, x_n; g])(x_* - x_n) = h_n(x_*).
\]
Using similar estimates as before, to obtain that \eqref{eq:auxmthz},  we can also obtain that 
\[
\| w_n \| \leq \Big( \epsilon + \delta + \frac{(L/2) a}{1 - \gamma} + M \tau \Big) \| x_n - x_* \| \leq \beta.
\]
Hence, $\|h_n(x_*)\| = \|w_n\| \leq \beta$. In addition,  also we can  obtain that 
\begin{align*}
\| h_n(x) - h_n(x') \| & = \| (B_n + [z, x_n; g])(x - x') - (f'(x_*) + [z, x_*; g])(x - x') \| \\
& \leq \| (B_n - f'(x_*))(x - x') \| + \| [z, x_n; g] - [z, x_*; g] \| \| x - x' \| \\
& \leq \Big( \delta + \frac{(L/2) a}{1 - \gamma}\Big) \| x - x' \| + M \tau \| x - x' \| \\
& = \mu \| x - x' \|, \qquad \forall x, x' \in B_{\alpha}[x_*].
\end{align*}
By applying Theorem~\ref{th:PertMetricReg}, with ${\bar x}=x_*$, $h = h_n$, $H=G$, $y=w_n$ and $x=x_*$, there exists $y_n \in (h_n + G)^{-1}(0)$ satisfying \eqref{eq:aa0} for $k = n$, and
\begin{equation} \label{eq:bggn}
\| y_n - x_* \| \leq \frac{\kappa'}{1 - \kappa \mu} \| w_n \| \leq  \frac{\kappa'}{1 - \kappa \mu} \Big( \epsilon + \delta+\frac{(L/2) a}{1 - \gamma} +  M\tau\Big) \|x_n - x_*\|  \leq \gamma \| x_n - x_* \|.
\end{equation}
Again, considering that $\gamma < 1$, we  conclude that $y_n \in O$. Next, applying Lemma~\ref{pr:condi} with $u = x_{n+1}$, $y = y_n$, $x = x_n$, $\tilde{y} = x_*$, $\tilde{x} = x_*$, and $\theta = \theta_0$, and the inequality \eqref{eq:csepxmuc}, we obtain
\begin{align*}
\| x_{n+1} - x_* \| & \leq \| y_n - x_* \| + \sqrt{2 \theta_0} \| y_n - x_n \| \\
& \leq \| y_n - x_* \| + \sqrt{2 \theta_0} ( \| y_n - x_* \| + \| x_* - x_n \| ) \\
& \leq (1 + \sqrt{2 \theta_0}) \| y_n - x_* \| + \sqrt{2 \theta_0} \| x_n - x_* \|.
\end{align*}
Using  inequality \eqref{eq:csepxmuc} and \eqref{eq:bggn} we conclude that 
\begin{equation*}
\| x_{n+1} - x_* \|  \leq \Big( \frac{\kappa'}{1 - \kappa \mu} (\epsilon + \delta+\frac{(L/2) a}{1 - \gamma} +M \tau) (1 + \sqrt{2 \theta_0}) + \sqrt{2 \theta_0} \Big) \| x_n - x_* \| \leq \gamma \| x_n - x_* \|.
\end{equation*}
Hence,  the induction is completed.  Moreover, taking into account that  $\gamma < 1$,  we conclude that $x_{n+1} \in O \cap C$. Therefore, 
\[
  \| x_k - x_* \| \leq \gamma \| x_{k-1} - x_* \|, \quad \qquad  k = 1, 2, \ldots.
\]
Consequently,  the sequence $(x_k)_{k \in \mathbb{N}} \subset B_{\tau}[x_*] \cap C$ and  converges q-linearly to $x_*$.

If $H = f + g + F$ is strongly metrically regular at $x_*$ for $0$, then the mapping $G$ is strongly metrically regular, and by the uniqueness part of Theorem~\ref{th:PertMetricReg}, the sequence $(x_k)_{k \in \mathbb{N}} \subset B_{\tau}[x_*] \cap C$ is unique and converges q-linearly to $x_*$. This completes the proof.
\end{proof}

\section{Conclusions} \label{Sec:Conclusions}
The proposed Broyden quasi-Newton secant-type method extends existing techniques for solving constrained generalized equations by incorporating elements of the Conditional Gradient method. The method achieves local Q-linear convergence under standard assumptions, such as Lipschitz continuity and metric regularity.  Future work could explore extensions to global convergence and performance improvements for large-scale applications, as well as adaptations for specific problem classes, such as variational inequalities and complementarity problems.


\begin{thebibliography}{10}

\bibitem{amaral2024quasi}
V.~Amaral, P.~Santos, G.~N. Silva, and S.~Souza.
\newblock A quasi-{N}ewton method for solving generalized equations by using a
  {K}antorovich approach.
\newblock {\em Computational and Applied Mathematics}, 43(2):104, 2024.

\bibitem{DontchevAragon2014}
F.~J. Arag{\'o}n~Artacho, A.~Belyakov, A.~L. Dontchev, and M.~L{\'o}pez.
\newblock Local convergence of quasi-{N}ewton methods under metric regularity.
\newblock {\em Comput. Optim. Appl.}, 58(1):225--247, 2014.

\bibitem{DontchevAragon2011}
F.~J. Arag{\'o}n~Artacho, A.~L. Dontchev, M.~Gaydu, M.~H. Geoffroy, and V.~M.
  Veliov.
\newblock Metric regularity of {N}ewton's iteration.
\newblock {\em SIAM J. Control Optim.}, 49(2):339--362, 2011.

\bibitem{BeckTeboulle2004}
A.~Beck and M.~Teboulle.
\newblock A conditional gradient method with linear rate of convergence for
  solving convex linear systems.
\newblock {\em Math. Methods Oper. Res.}, 59(2):235--247, 2004.

\bibitem{bellavia2006}
S.~Bellavia and B.~Morini.
\newblock Subspace trust-region methods for large bound-constrained nonlinear
  equations.
\newblock {\em SIAM J. Numer. Anal.}, 44(4):1535--1555, 2006.

\bibitem{Bertsekas1999}
D.~P. Bertsekas.
\newblock {\em Nonlinear programming}.
\newblock Athena Scientific Optimization and Computation Series. Athena
  Scientific, Belmont, MA, second edition, 1999.

\bibitem{Bonnans}
J.~F. Bonnans.
\newblock Local analysis of {N}ewton-type methods for variational inequalities
  and nonlinear programming.
\newblock {\em Appl. Math. Optim.}, 29(2):161--186, 1994.

\bibitem{Catinas1994}
E.~C\u{a}tina\c{s}.
\newblock On some iterative methods for solving nonlinear equations.
\newblock {\em Rev. Anal. Num\'{e}r. Th\'{e}or. Approx.}, 23(1):47--53, 1994.

\bibitem{daSilvaJunior2024}
P.~C. da~Silva~Junior, O.~P. Ferreira, L.~D. Secchin, and G.~N. Silva.
\newblock Secant-inexact projection algorithms for solving a new class of
  constrained mixed generalized equations problems.
\newblock {\em J. Comput. Appl. Math.}, 440:Paper No. 115638, 18, 2024.

\bibitem{Daniel1973}
J.~W. Daniel.
\newblock Newton's method for nonlinear inequalities.
\newblock {\em Numer. Math.}, 21:381--387, 1973.

\bibitem{OliveiraFerreiraSilva2019}
F.~R. de~Oliveira, O.~P. Ferreira, and G.~N. Silva.
\newblock Newton's method with feasible inexact projections for solving
  constrained generalized equations.
\newblock {\em Comput. Optim. Appl.}, 72(1):159--177, 2019.

\bibitem{de2024modified}
V.~de~Sousa~Amaral, P.~S.~M. dos Santos, G.~N. Silva, and S.~Souza.
\newblock A modified {N}ewton-secant method for solving nonsmooth generalized
  equations.
\newblock {\em Mathematical Modelling and Analysis}, 29(2):347--366, 2024.

\bibitem{DemyanovRubinov1973}
V.~F. Demyanov and A.~M. Rubinov.
\newblock {\em Approximate Methods in Optimization Problems}.
\newblock American Elsevier Publishing Company, New York, 1970.

\bibitem{Dontchev1996}
A.~L. Dontchev.
\newblock Local analysis of a {N}ewton-type method based on partial
  linearization.
\newblock In {\em The mathematics of numerical analysis ({P}ark {C}ity, {UT},
  1995)}, volume~32 of {\em Lectures in Appl. Math.}, pages 295--306. Amer.
  Math. Soc., Providence, RI, 1996.

\bibitem{DontchevAs1996}
A.~L. Dontchev.
\newblock Uniform convergence of the {N}ewton method for {A}ubin continuous
  maps.
\newblock {\em Serdica Math. J.}, 22(3):283--296, 1996.

\bibitem{DontchevRockafellar2013}
A.~L. Dontchev and R.~T. Rockafellar.
\newblock Convergence of inexact {N}ewton methods for generalized equations.
\newblock {\em Math. Program.}, 139(1-2, Ser. B):115--137, 2013.

\bibitem{DontchevRockafellar2010Book}
A.~L. Dontchev and R.~T. Rockafellar.
\newblock {\em Implicit functions and solution mappings}.
\newblock Springer Series in Operations Research and Financial Engineering.
  Springer, New York, 2014.

\bibitem{Dunn1979}
J.~C. Dunn.
\newblock Rates of convergence for conditional gradient algorithms near
  singular and nonsingular extremals.
\newblock {\em SIAM J. Control Optim.}, 17:187--211, 1979.

\bibitem{Dunn1980}
J.~C. Dunn.
\newblock Convergence rates for conditional gradient sequences generated by
  implicit step length rules.
\newblock {\em SIAM J. Control Optim.}, 18(5):473--487, 1980.

\bibitem{DunnHarshbarger1978}
J.~C. Dunn and S.~Harshbarger.
\newblock Conditional gradient algorithms with open loop step size rules.
\newblock {\em J. Math. Anal. Appl.}, 62(2):432--444, 1978.

\bibitem{FerreiraSilva2017}
O.~P. Ferreira and G.~N. Silva.
\newblock Kantorovich's theorem on {N}ewton's method for solving strongly
  regular generalized equation.
\newblock {\em SIAM J. Optim.}, 27(2):910--926, 2017.

\bibitem{FERREIRASilva2018}
O.~P. Ferreira and G.~N. Silva.
\newblock Local convergence analysis of {N}ewton's method for solving strongly
  regular generalized equations.
\newblock {\em J. Math. Anal. Appl.}, 458(1):481--496, 2018.

\bibitem{FerrisPang1997}
M.~C. Ferris and J.~S. Pang.
\newblock Engineering and economic applications of complementarity problems.
\newblock {\em SIAM Rev.}, 39(4):669--713, 1997.

\bibitem{FrankWolfe1956}
M.~Frank and P.~Wolfe.
\newblock An algorithm for quadratic programming.
\newblock {\em Nav. Res. Log.}, pages 95--110, 1956.

\bibitem{GeoffroyPietrus2004}
M.~H. Geoffroy and A.~Pi\'{e}trus.
\newblock Local convergence of some iterative methods for generalized
  equations.
\newblock {\em J. Math. Anal. Appl.}, 290(2):497--505, 2004.

\bibitem{MaxJefferson2017}
M.~L.~N. Gon\c{c}alves and J.~G. Melo.
\newblock A {N}ewton conditional gradient method for constrained nonlinear
  systems.
\newblock {\em J. Comput. Appl. Math.}, 311:473--483, 2017.

\bibitem{GoncalvesOliveira2017}
M.~L.~N. Gon\c{c}alves and F.~R. Oliveira.
\newblock An inexact {N}ewton-like conditional gradient method for constrained
  nonlinear systems.
\newblock {\em Appl. Numer. Math.}, 132:22--34, 2018.

\bibitem{Nemirovski2015}
Z.~Harchaoui, A.~Juditsky, and A.~Nemirovski.
\newblock Conditional gradient algorithms for norm-regularized smooth convex
  optimization.
\newblock {\em Math. Program.}, 152(1-2, Ser. A):75--112, 2015.

\bibitem{HernandezRubio2002}
M.~A. Hern\'{a}ndez and M.~J. Rubio.
\newblock Semilocal convergence of the secant method under mild convergence
  conditions of differentiability.
\newblock {\em Comput. Math. Appl.}, 44(3-4):277--285, 2002.

\bibitem{HiloutPietrus2006}
S.~Hilout and A.~Pi\'{e}trus.
\newblock A semilocal convergence of a secant-type method for solving
  generalized equations.
\newblock {\em Positivity}, 10(4):693--700, 2006.

\bibitem{IzmailovSolodov2010}
A.~F. Izmailov and M.~V. Solodov.
\newblock Inexact {J}osephy-{N}ewton framework for generalized equations and
  its applications to local analysis of {N}ewtonian methods for constrained
  optimization.
\newblock {\em Comput. Optim. Appl.}, 46(2):347--368, 2010.

\bibitem{Jean-AlexisPietrus2008}
C.~Jean-Alexis and A.~Pi\'{e}trus.
\newblock On the convergence of some methods for variational inclusions.
\newblock {\em Rev. R. Acad. Cienc. Exactas F\'{\i}s. Nat. Ser. A Mat. RACSAM},
  102(2):355--361, 2008.

\bibitem{josephy1979}
N.~H. Josephy.
\newblock {\em Newton's method for generalized equations and the PIES energy
  model}.
\newblock PhD thesis, Department of Industrial Engineering, University of
  Wisconsin--Madison, 1979.

\bibitem{Cruz2014}
W.~La~Cruz.
\newblock A projected derivative-free algorithm for nonlinear equations with
  convex constraints.
\newblock {\em Optim. Methods Softw.}, 29(1):24--41, 2014.

\bibitem{LanZhou2016}
G.~Lan and Y.~Zhou.
\newblock Conditional gradient sliding for convex optimization.
\newblock {\em SIAM J. Optim.}, 26(2):1379--1409, 2016.

\bibitem{leong2011matrix}
W.~J. Leong, M.~A. Hassan, and M.~W. Yusuf.
\newblock A matrix-free quasi-newton method for solving large-scale nonlinear
  systems.
\newblock {\em Computers \& Mathematics with Applications}, 62(5):2354--2363,
  2011.

\bibitem{LEVITIN1966}
E.~S. Levitin and B.~T. Polyak.
\newblock Constrained minimization methods.
\newblock {\em USSR Comput. Math. Math. Phys.}, 6(5):1--50, 1966.

\bibitem{RonnyTeboulle2013}
R.~Luss and M.~Teboulle.
\newblock Conditional gradient algorithms for rank-one matrix approximations
  with a sparsity constraint.
\newblock {\em SIAM Rev.}, 55(1):65--98, 2013.

\bibitem{MariniMoriniPorcelli2017}
L.~Marini, B.~Morini, and M.~Porcelli.
\newblock {Q}uasi-{N}ewton methods for constrained nonlinear systems:
  complexity analysis and applications.
\newblock {\em Available on http://www.optimization-online.org/}, 2017.

\bibitem{RashidWangLi2012}
M.~H. Rashid, J.~H. Wang, and C.~Li.
\newblock Convergence analysis of a method for variational inclusions.
\newblock {\em Appl. Anal.}, 91(10):1943--1956, 2012.

\bibitem{RashidLiWu2013}
M.~H. Rashid, S.~H. Yu, C.~Li, and S.~Y. Wu.
\newblock Convergence analysis of the {G}auss-{N}ewton-type method for
  {L}ipschitz-like mappings.
\newblock {\em J. Optim. Theory Appl.}, 158(1):216--233, 2013.

\bibitem{Robinson1972_2}
S.~M. Robinson.
\newblock Extension of {N}ewton's method to nonlinear functions with values in
  a cone.
\newblock {\em Numer. Math.}, 19:341--347, 1972.

\bibitem{Robinson1979}
S.~M. Robinson.
\newblock Generalized equations and their solutions, {P}art {I}: {B}asic
  theory.
\newblock {\em Math. Program. Stud.}, 10:128--141, 1979.

\bibitem{Robinson1980}
S.~M. Robinson.
\newblock Strongly regular generalized equations.
\newblock {\em Math. Oper. Res.}, 5(1):43--62, 1980.

\bibitem{Robinson1982}
S.~M. Robinson.
\newblock Generalized equations and their solutions, {P}art {II}:
  {A}pplications to nonlinear programming.
\newblock {\em Math. Program. Stud.}, 19:200--221, 1982.

\bibitem{Robinson1983}
S.~M. Robinson.
\newblock Generalized equations.
\newblock In {\em Mathematical programming: the state of the art ({B}onn,
  1982)}, pages 346--367. Springer, Berlin, 1983.

\end{thebibliography}

\end{document}